\documentclass[11pt,oneside]{article}

\usepackage[utf8]{inputenc} \usepackage[T1]{fontenc}    \usepackage[english]{babel}

\usepackage{scrextend}

\usepackage{amsfonts}
\usepackage{nicefrac}       \usepackage{microtype}      \usepackage{lmodern}
\usepackage{amssymb,amsmath,amsthm}

\usepackage{stmaryrd}
\SetSymbolFont{stmry}{bold}{U}{stmry}{m}{n}

\usepackage{bbm,bm}
\usepackage{latexsym}
\usepackage{xcolor}

\usepackage{enumerate}
\usepackage{verbatim}
\usepackage{booktabs}       

\usepackage{url}            \usepackage[colorlinks=true]{hyperref}
\usepackage[numbers,sort&compress,square,comma]{natbib}
\usepackage[capitalise,noabbrev]{cleveref}

\usepackage{parskip}
\usepackage{geometry}
\geometry{margin=1in}

\usepackage[short]{optidef}
\usepackage{algorithmic,algorithm}

\usepackage{thmtools}

\declaretheorem{theorem}

\declaretheorem{lemma}

\declaretheorem{proposition}

\declaretheoremstyle[qed=$\square$]{definitionwithend}
\declaretheorem[style=definitionwithend]{definition}
\declaretheorem[style=definitionwithend]{assumption}
\declaretheorem[style=definitionwithend]{example}
\declaretheorem[style=definitionwithend]{remark}

\crefname{assumption}{Assumption}{Assumptions}
\crefname{conjecture}{Conjecture}{Conjectures}

\newcommand{\ceil}[1]{\ensuremath{\left\lceil #1 \right\rceil}}
\newcommand{\abs}[1]{\ensuremath{\left\lvert #1 \right\rvert}}

\newcommand{\by}{\times}
 
\newcommand{\norm}[1]{\ensuremath{\left\lVert #1 \right\rVert}}
\newcommand{\ip}[1]{\ensuremath{\left\langle #1 \right\rangle}}
\newcommand{\grad}{\ensuremath{\nabla}}

\newcommand{\set}[1]{\left\{#1\right\}}

\newcommand{\mb}{\mathbf}

\def\N{{\mathbb{N}}}

\def\R{{\mathbb{R}}}
\def\S{{\mathbb{S}}}

\def\cA{{\cal A}}
\def\cB{{\cal B}}

\def\cK{{\cal K}}

\def\cU{{\cal U}}

\def\cX{{\cal X}}

\DeclareMathOperator*{\argmin}{arg\,min}

\DeclareMathOperator{\Diag}{Diag}
\DeclareMathOperator{\diag}{diag}
\DeclareMathOperator{\tr}{tr}

\newenvironment{smallpmatrix}
    {\left(
    \begin{smallmatrix}} 
    {\end{smallmatrix}
    \right)
    }

\newcommand{\framedheader}[3]{
  \framebox[\textwidth]{
    \vbox{
      \vspace{2mm}
      \hbox to \textwidth {\hspace{1em}\today \hfill #1\hspace{1em}}
      \vspace{4mm}
      \hbox to \textwidth {\hfill \Large{#2} \hfill}
      \vspace{2mm}
    }
  }
  \vspace*{4mm}
}
 \usepackage{letltxmacro}
\LetLtxMacro\orgvdots\vdots
\LetLtxMacro\orgddots\ddots

\makeatletter
\DeclareRobustCommand\vdots{\mathpalette\@vdots{}}
\newcommand*{\@vdots}[2]{\sbox0{$#1\cdotp\cdotp\cdotp\m@th$}\sbox2{$#1.\m@th$}\vbox{\dimen@=\wd0 \advance\dimen@ -3\ht2 \kern.5\dimen@
\dimen@=\wd2 \advance\dimen@ -\ht2 \dimen2=\wd0 \advance\dimen2 -\dimen@
    \vbox to \dimen2{\offinterlineskip
      \copy2 \vfill\copy2 \vfill\copy2 }}}
\DeclareRobustCommand\ddots{\mathinner{\mathpalette\@ddots{}\mkern\thinmuskip
  }}
\newcommand*{\@ddots}[2]{\sbox0{$#1\cdotp\cdotp\cdotp\m@th$}\sbox2{$#1.\m@th$}\vbox{\dimen@=\wd0 \advance\dimen@ -3\ht2 \kern.5\dimen@
\dimen@=\wd2 \advance\dimen@ -\ht2 \dimen2=\wd0 \advance\dimen2 -\dimen@
    \vbox to \dimen2{\offinterlineskip
      \hbox{$#1\mathpunct{.}\m@th$}\vfill
      \hbox{$#1\mathpunct{\kern\wd2}\mathpunct{.}\m@th$}\vfill
      \hbox{$#1\mathpunct{\kern\wd2}\mathpunct{\kern\wd2}\mathpunct{.}\m@th$}}}}

\DeclareRobustCommand\bddots{\mathinner{\mathpalette\@bddots{}\mkern\thinmuskip
  }}
\newcommand*{\@bddots}[2]{\sbox0{$#1\cdotp\cdotp\cdotp\m@th$}\sbox2{$#1.\m@th$}\vbox{\dimen@=\wd0 \advance\dimen@ -3\ht2 \kern.5\dimen@
\dimen@=\wd2 \advance\dimen@ -\ht2 \dimen2=\wd0 \advance\dimen2 -\dimen@
    \vbox to \dimen2{\offinterlineskip
      \hbox{$#1\mathpunct{\kern\wd2}\mathpunct{\kern\wd2}\mathpunct{.}\m@th$}\vfill
      \hbox{$#1\mathpunct{\kern\wd2}\mathpunct{.}\m@th$}\vfill
      \hbox{$#1\mathpunct{.}\m@th$}}}}
\makeatother \usepackage{soul}

 \DeclareMathOperator{\supp}{supp}
\DeclareMathOperator{\sign}{sign}
\DeclareMathOperator{\dist}{dist}
\DeclareMathOperator{\rip}{RIP}
\newcommand{\ripup}{\rip^+}
\newcommand{\ripdown}{\rip^-}
\newcommand{\journalonly}[1]{}
\usepackage{siunitx}
\usepackage{subcaption}

\usepackage{graphicx}
\graphicspath{{figures/}}

\makeatletter
\renewcommand\paragraph{\@startsection{paragraph}
    {4}
    {\z@}
    {3.25ex \@plus1ex \@minus.2ex}
    {-1em}
    {\normalfont\normalsize\bfseries\maybe@addperiod}}
\newcommand{\maybe@addperiod}[1]{#1\@addpunct{.}}
\makeatother

\begin{document}

\title{Sharpness and well-conditioning of nonsmooth convex formulations in statistical signal recovery}
\author{
    Lijun Ding
    \thanks{Department of Mathematics, University of California San Diego. San Diego, CA, USA(\texttt{l2ding@ucsd.edu})}
    \and
    Alex L.\ Wang
    \thanks{Purdue University. West Lafayette, IN, USA(\texttt{wang5984@purdue.edu})}
}
\date{\today}
\maketitle

\begin{abstract}
    We study a sample complexity vs.\ conditioning tradeoff in modern signal recovery problems (including sparse recovery, low-rank matrix sensing, covariance estimation, and abstract phase retrieval), where convex optimization problems are built from sampled observations.
We begin by introducing  a set of condition numbers related to sharpness in $\ell_p$ or Schatten-$p$ norms ($p\in[1,2]$) of a nonsmooth formulation for these problems.
Then, we show that these condition numbers become dimension \emph{independent} constants 
in each of the example signal recovery problems
once 
the sample size exceeds some constant multiple of the recovery threshold.
Structurally, this result ensures that 
the inaccuracy in the recovered signal due to both observation noise and optimization error is well-controlled. Algorithmically, such a result ensures that a new restarted mirror descent method achieves nearly-dimension-independent linear convergence to the signal.
This new first-order method is general and applies to any sharp convex function in an $\ell_p$ or Schatten-$p$ norm ($p\in[1,2]$).
 \end{abstract}

\section{Introduction}
\label{sec:intro}
This paper studies a \textit{sample complexity vs.\ convex optimization conditioning} tradeoff in modern signal recovery problems such as 
sparse recovery  \cite{candes2005decoding}, low-rank matrix  sensing \cite{recht2010guaranteed,cai2015rop}, phase retrieval \cite{candes2013phaselift}, and covariance estimation \cite{chen2015exact}. 
To illustrate the setup, the challenges, and our results concretely, we will only consider the abstract phase retrieval problem (henceforth phase retrieval) in the introduction.
The general setup and results follow a similar pattern and are deferred to later sections.

Phase retrieval asks us  to recover an unknown rank-one positive semidefinite (PSD) matrix $X^\natural\in\S^n$ given a vector of $m$ measurements $b = \cA(X^\natural)$ where the linear map $\cA:\S^n\to\R^m$ is defined by $\cA(X)_i = g_i^\intercal X g_i$ for i.i.d.\ $g_i\sim N(0, I_n/m)$. 
Here and throughout, $\S^n$ and $\S^n_+$ denote the vector space of symmetric matrices and the cone of PSD matrices respectively. 
Concretely, we are asked to
\begin{center}
	recover $X^\natural \in \S^n$ from $(b,\cA)$ where $b = \cA(X^\natural)\in \R^m$.
\end{center}

In~\cite{candes2014solving}, Cand\`es and Li 
established an \emph{exact recovery} result: If $m=\Theta\left(n\right)$, then, with high probability (w.h.p.)\footnote{An event happens w.h.p. if the probability is larger than $1-\exp(-cm)$ for some numerical constant $c$.}, $X^\natural$ is the unique optimizer for the convex optimization problem
\begin{align}
	\label{eq:phret_intro}
	\min_{X\in\S^n}\set{\tr(X):\, \begin{array}
			{l}
			\cA(X) = b\\
			X\in\S^n_+
	\end{array}}.
\end{align}
Here, the choice of the objective function $\tr(X)$ and the constraint $X\in\S^n_+$ reflect prior knowledge that the true signal is low rank and PSD respectively.

\paragraph{Issues and challenges} Unfortunately, convexity and uniqueness of solution are not the end of the story as 
\eqref{eq:phret_intro} may not be well-conditioned. In such a situation, the recovery process may suffer from:
\begin{itemize}
	\item \textit{measurement error}: if the observation $b$ is corrupted by noise, the optimal solution of \eqref{eq:phret_intro} may deviate greatly from $X^\natural$ or may even cease to 
	exist;
	\item \textit{optimization error}: the problem \eqref{eq:phret_intro} may admit near-optimal and near-feasible solutions in terms of function value and constraint violation that are far from the true signal $X^\natural$;
	\item \textit{slow algorithm convergence}:
    the iteration complexity of existing first-order algorithms for solving \eqref{eq:phret_intro} may depend polynomially on the dimension due to polynomially poor conditioning of 
\eqref{eq:phret_intro}.\footnote{These polynomial factors may be even worse when measuring error in terms of distance to the true solution as opposed to function value.}
	Such dependence weakens the applicability of existing first-order methods on large-scale instances of \eqref{eq:phret_intro}.
\end{itemize}

\paragraph{Our contribution} 
In this paper, we introduce a notion of conditioning related to \emph{sharpness} for \eqref{eq:phret_intro} based on an \emph{unconstrained nonsmooth 
	reformulation}
of \eqref{eq:phret_intro}
that allows us to quantitatively control each of the
above issues.
To measure the conditioning of \eqref{eq:phret_intro}, we introduce the following penalized nonsmooth version of \eqref{eq:phret_intro}:
\begin{align}
	\label{eq:phret_penalized_intro}
	\min_{X\in\S^n} F_{r,\ell}(X) \coloneqq \tr(X) + r \norm{\cA(X) - b}_1 + \ell \dist_{1}(X,\S^n_+).
\end{align}
Here, the distance $\dist_{1}(X,\S^n_+) \coloneqq \inf_{Y\in \S^n_+} \norm{Y-X}_1$ is measured in the Schatten-1 norm $\norm{\cdot}_1$, i.e., the sum of the singular values, and
$r,\ell\geq 0$ are penalty parameters on the violations of $\cA(X) = b$ and $X\in\S^n_+$ respectively.
To clean up the exposition, we will understand ``$\ell_p$ norm'' of a matrix to mean its Schatten-$p$ norm and ``$\ell_p$ norm'' of a vector mean its usual $\ell_p$ norm.

We emphasize that we penalize the violation of $\cA(X) = b$ by $\norm{\cA(X) - b}_1$ and not the more common least-squares error $\norm{\cA(X) - b}_2^2$ 
and we use $\ell_1$ norm in the distance function.
The term $\norm{\cA(X) - b}_1$ is sometimes referred to as the least absolute deviation error term~\cite{portnoy1997gaussian} and is used as a robust version of the least-squares error.
Although this choice leads to a nonsmooth problem, it also
opens the possibility for the function \eqref{eq:phret_penalized_intro} to be $\mu$-sharp around $X^\natural$ w.r.t.\ $\ell_1$ for some $\mu>0$. That is, for all $X\in\S^n$
\begin{equation}\label{eq: sharp_phase-retrieval}
	\underbrace{\tr(X) + r \norm{\cA(X) - b}_1 + \ell \dist_{\ell_1}(X,\S^n_+)}_{F_{r,\ell}(X)} - 
	\underbrace{\tr(X^\natural)}_{F_{r,\ell}(X^\natural)} \geq \mu \norm{X-X^\natural}_1.
\end{equation}
When such a bound holds, we will think of the parameters $(\mu,r,\ell)$ as partially describing the conditioning of \eqref{eq:phret_intro}.
Finally, in order to make our notion of conditioning ``aware'' of scaling, we additionally 
track the Lipschitz constant, $L$, of \eqref{eq:phret_penalized_intro} with respect to $\ell_1$: 
\begin{equation}\label{eq: Lipschitz_phase-retrieval}
	|F_{r,\ell}(X)-F_{r,\ell}(Y)|\leq L \norm{X-Y}_1, \quad \text{for all}\; X,Y \in \S^n.
\end{equation}
We use the set of numbers $(\mu,r,\ell,L)$ to measure the conditioning of \eqref{eq:phret_intro}.
The sharpness parameter $\mu$ in some sense will control absolute notions of error, while the condition number $\kappa \coloneqq L/\mu$ will control relative notions of error and convergence rates of first-order methods.\footnote{One may replace the $\ell_1$ norm with other norms in the right hand side of these definitions \eqref{eq: sharp_phase-retrieval} and \eqref{eq: Lipschitz_phase-retrieval}. The benefit of the $\ell_1$ norm, as we will discuss, is that the conditioning in our applications can be shown to be \emph{dimension-independent}.}   

We now describe our main results in the setting of phase retrieval:
\begin{itemize}
	\item \emph{Well-conditioning}: By adapting well-known proofs for exact recovery \cite{candes2005decoding,recht2010guaranteed}, we show in  \cref{thm: sampleVSconditioning} that \eqref{eq:phret_penalized_intro} is $\mu$-sharp 
	and $L$-Lipschitz with 
	$\mu,L = \Theta(1)$ for $r,\ell =\Theta(1)$ w.h.p.\ once $m = \Theta(n)$.
Almost identical results on dimension-independent conditioning for the other signal recovery problems are also presented in \Cref{thm: sampleVSconditioning}.
\item \emph{Optimization error}: In practice, we are unable to solve \eqref{eq:phret_intro} exactly and must resort to numerical optimization. Using $\mu$-sharpness, i.e., \eqref{eq: sharp_phase-retrieval}, we have that any $\epsilon$-suboptimal solution 
	to \eqref{eq:phret_intro} or \eqref{eq:phret_penalized_intro} satisfies $\norm{X - X^\natural}_1 \leq \epsilon/\mu$. Thus, the parameter $\mu$ allows us to control how optimization error affects the recovery process.
	\item \emph{Measurement error}: Suppose that instead of observing $b = \cA(X^\natural)$, we observe $\tilde b= \cA(X^\natural) + \delta$. 
	It is known that \eqref{eq:phret_intro} actually admits a unique \emph{feasible} solution $X^\natural$ w.h.p.\ \cite{candes2014solving}, thus 
	\eqref{eq:phret_intro} with $b$ replaced by $\tilde b$ is likely to be infeasible. On the other hand, 
given any optimizer $\tilde X$ of \eqref{eq:phret_penalized_intro} with $\tilde b$ in place of $b$, we may use sharpness in the noiseless case to bound $\norm{\tilde X - X^\natural}_1 \leq O(\norm{\delta}_1/\mu)$ in the noisy case (see \cref{prop:robust_noise}).

	In a similar vein, we may consider a setting where $b$ is corrupted by a sparse vector $\delta$. Sharpness in the noiseless case tells us that the optimizer of \eqref{eq:phret_penalized_intro} is \emph{unchanged} if $\abs{\supp(\delta)}/m = O(\mu)$, i.e., if up to an $O(\mu)$ fraction of the entries of $b$ are corrupted (see \cref{prop:robust_sparse,rem:sparse_noise}).

    Thus, the parameter $\mu$ allows us to control how measurement error affects the recovery process.

	\item \emph{Algorithms}: Since \eqref{eq:phret_intro} is well-conditioned in $\ell_1$,
    it necessarily gains a dimension dependence and becomes ill-conditioned in the standard $\ell_2$ norm.
    Specifically, \cref{prop:poor_ell_2} implies that if \eqref{eq:phret_intro} is $\Theta(1)$-sharp and $\Theta(1)$-Lipschitz in the $\ell_1$ norm, then it must be $O(1)$-sharp and $\Omega(\sqrt{n})$-Lipschitz in the $\ell_2$ norm.
    This ill-conditioning causes both 
    the theoretical performance guarantees and the numeric performance of standard sharpness-aware first-order methods, which are based on the $\ell_2$ norm, to deteriorate as the dimension increases on problem \eqref{eq:phret_intro}.
    For example, subgradient descent with Polyak stepsizes,
    which is optimal on the class of $L_2$-Lipschitz, $\mu_2$-sharp functions in the $\ell_2$ norm, can at best guarantee a convergence rate of 
    \begin{align*}
        O\left(\kappa^2 n\log\left(\frac{1}{\epsilon}\right)\right)
    \end{align*}
    on \eqref{eq:phret_intro} (see \cref{subfig:polyak_rgd}).
    Thus, to solve large-scale instances of \eqref{eq:phret_intro}, we develop a new restarted mirror descent algorithm (RMD), which can be applied to \emph{general} convex functions that 
	are $\mu$-sharp and $L$-Lipschitz in terms of an $\ell_p$ norm with $p\in[1,2]$ (see \Cref{sec:algs}).
When applied to \eqref{eq:phret_penalized_intro}, we have $p=1$ and RMD produces an $\epsilon$-suboptimal solution in
	\begin{align*}
		O\left(\kappa^2\log(n)\log\left(\frac{1}{\epsilon}\right)\right)
	\end{align*}
	iterations.
    This convergence rate depends only logarithmically on $n$. This contrasts with the linear dependence on $n$ that an $\ell_2$-based algorithm would incur.
	These convergence guarantees also hold after appropriate modifications in the presence of corruption or noise (see \cref{prop:robust_noise,prop:robust_sparse}) with guarantees depending on $\kappa$. 
\end{itemize}

We note that our results on optimization error, measurement error, and algorithms above depend solely on the deterministic assumption of sharpness and the conditioning $(\mu,r,\ell, L)$. The statistical assumptions are only used to prove that the conditioning $(\mu,r,\ell, L)$ is well-behaved w.h.p. We emphasize again that the above results continue to hold similarly for other modern signal recovery problems.

\subsection{Related work}
To better position our work in the literature, in this section, we discuss related work on error bounds and well-conditioning, 
sharpness and well-conditioning in statistical recovery, and first-order methods for minimizing sharp functions.

\paragraph{Establishing error bounds and well-conditioning} The sharpness condition in \eqref{eq: sharp_phase-retrieval} can be viewed as an error bound for \eqref{eq:phret_intro}.
Error bounds and conditioning are 
central to optimization problems both structurally and algorithmically \cite{alizadeh1997complementarity,lewis2014nonsmooth,zhou2017unified,drusvyatskiy2018error,necoara2019linear,johnstone2020faster}.
Consider the following general 
error bound for \eqref{eq:phret_intro}: 
\begin{equation*}\tr(X)+ r \norm{\cA(X) - b}_1 + \ell \dist_{\norm{\cdot}}(X,\S^n_+) - 
	\tr(X^\natural) \geq \mu \norm{X-X^\natural}^h, \quad \text{for all}\; X\in\S^n,
\end{equation*}
where, in addition to the $r$ and $\ell$ introduced in \eqref{eq: sharp_phase-retrieval}, we also have the power $h$, and 
a general norm $\norm{\cdot}$. Most methods for establishing error bounds either require that \eqref{eq:phret_intro} is linear \cite{hoffman2003approximate} or that regularity conditions such as Slater's condition \cite{bauschke1999strong,zhang2000global}, strict complementarity \cite{sturm2000error,ding2022strict}, or nondegeneracy conditions \cite{nayakkankuppam1999conditioning} hold. Additional work has studied error bounds in generic semialgebraic settings \cite{drusvyatskiy2016generic}. Results 
of this type
generally place an emphasis on establishing the power of $h$ (usually to $1$ or $2$) and provide only
weak bounds on $\mu$, $r$, and $\ell$. 
This is natural as the power $h$
determines the linear or sublinear convergence rate \cite{attouch2010proximal,johnstone2020faster}. On the other hand, if $\mu$, $r$, or $\ell$ depend polynomially on the dimension, then first-order algorithms may still be doomed to poor or dimension-dependent convergence rates.
This work connects the quantities $\mu$, $r$, and $\ell$ to the quantitatively better-understood restricted isometry property (RIP).
This connection gives precise estimates of ($\mu,r,\ell,L)$ in our settings and shows that these quantities are in fact dimension independent.

\paragraph{Sharpness and well conditioning in statistical recovery} A series of works \cite{duchi2019solving,li2020nonconvex,charisopoulos2019composite,charisopoulos2021low} studied the local landscape of \emph{nonconvex} nonsmooth formulations in statistical recovery near the signal, particularly in low-rank matrix recovery. They 
showed that near a factored form of the ground truth $X^\natural$, the natural loss function is sharp, and the local condition number (in terms of the Frobenius or Schatten-$2$ norm) does not depend on the dimension directly. Hence, they can apply off-the-shelf algorithms, such as subgradient and prox-linear methods \cite{davis2018subgradient,drusvyatskiy2019efficiency}  to achieve quick convergence to a factored form of $X^\natural$. Note that results of this type are local, and the algorithms require specific model knowledge to construct an appropriate initializer to achieve provable guarantees. In contrast, the methods developed here are convex optimization methods so they are not as sensitive to poor initialization.

\paragraph{First-order methods for minimizing sharp functions}
Early work in this area established linear convergence of the subgradient method (with appropriate step sizes) for $\mu$-sharp $L$-Lipschitz functions in the Euclidean norm \cite{eremin1965relaxation,polyak1969minimization,goffin1977convergence}.
These methods and their proofs are adapted to the Euclidean norm and incur a \emph{linear dependence on the dimension} when applied to $\mu$-sharp $L$-Lipschitz functions in $\ell_1$ or Schatten-1 norms.

Similar guarantees can be derived as a consequence of restarting schemes. Perhaps the earliest work discussing restarting schemes is \cite{nemirovski1985optimal}, where a restarting scheme was developed for convex H\"older-smooth minimization in $\ell_p$ spaces. More recent work~\cite{yang2018rsg,roulet2017sharpness,roulet2015renegar,necoara2019linear}
has used restarting schemes to accelerate variants of (sub)gradient descent in the presence of different growth conditions, e.g., a local guarantee of the form\footnote{Other forms of the necessary ``growth'' are possible in different settings.}
\begin{align*}
	f(x)-\min_x f(x) \geq \mu \dist(x,\cX)^h
\end{align*}
for all $x$ close enough to the set of minimizers, $\cX$.
Here, $\dist(\cdot,\cdot)$ is typically measured in the Euclidean norm and $h\in[1,\infty)$ captures the H\"olderian growth of $f$.
Taking $h=1$ recovers the usual definition of sharpness. In this setting, restarted subgradient descent achieves a linear convergence rate of $O\left(\kappa^{2}\log(1/\epsilon)\right)$ \cite{yang2018rsg}.
Adaptive versions of these restart schemes have also been developed~\cite{roulet2017sharpness,renegar2022simple} that do not require the sharpness parameter $\mu$ to be specified but incur an additional $\log(1/\epsilon)$ factor in the convergence rate.

We emphasize that much of the preceding literature on first-order methods for sharp convex functions focuses on the Euclidean case. The only exception we know of is the work of \cite{roulet2017sharpness}, which develops a restarted mirror descent method for functions satisfying a property inspired by sharpness in the Euclidean setting. Roulet and d'Aspremont~\cite{roulet2017sharpness} show that their method can be used to achieve $O(\kappa^2\log(n)\log(1/\epsilon))$ convergence on this particular function class.
Unfortunately, their proxy for sharpness is quite restrictive and does not hold in settings where sparse or low rank solutions are expected.
Our work in \cref{sec:algs} is closely related to \cite{roulet2017sharpness} but uses a different specification of the restarting mechanism. This change allows us to use a more natural definition of sharpness that is also much broader (see \cref{rem:roulet_comparison} for a thorough comparison).

\subsection{Outline and notation}
\paragraph{Outline} We begin, in \Cref{sec:sharpness}, by setting up notation and defining sharpness for an abstract signal recovery problem.
We will quantify sharpness and conditioning by a set of parameters $(\mu,r,\ell,L)$.
In \Cref{sec:sharpness_examples,sec:Sharpnessconditioningandsamplecomplexity}, we establish well-conditioning and sharpness under the Gaussian sensing models 
for the problems of sparse recovery, low-rank matrix and bilinear sensing, and covariance estimation in the absence of noise.
Our proof
relies on the well-known restricted isometry property (RIP).
In \cref{sec:stat}, we investigate how sharp instances of the abstract signal recovery problem behave in the presence of noise. 
In \cref{sec:algs}, we describe and analyze an algorithm restarted mirror descent (RMD) that has linear and nearly dimension-independent convergence rates when applied to sharp functions in $\ell_p$ or Schatten-$p$ norms (see \cref{as:lp_setup}). In conjunction with the results of \Cref{sec:stat}, we may apply RMD to obtain linear nearly dimension-independent convergence rates even in the presence of noise for these statistical recovery problems.

\paragraph{A note on $\ell_p$ and Schatten-$p$ norms}
Let $p\in[1,\infty]$. We will overload notation and use $\norm{x}_p$ for the $\ell_p$ norm when $x\in\R^n$ and the Schatten-$p$ norm when $x\in\R^{n\by N}$. Recall that the nuclear norm is equivalent to the Schatten-$1$ norm, the Frobenius norm is equivalent to the Schatten-$2$ norm, and the spectral norm is equivalent to the Schatten-$\infty$ norm. To streamline the text, we will also take ``$\ell_p$ norm'' of a matrix to mean its Schatten-$p$ norm.

\section{Preliminaries}
\label{sec:sharpness} 
In this section, we describe an abstract signal recovery problem and the convex optimization approach. Then, we define our measures of sharpness and conditioning.
\subsection{Signal recovery via convex optimization}
Let $V$ and $W$ be normed Euclidean spaces. In the abstract signal recovery problem, we aim to recover an unknown element $x^\natural\in V$, called the signal, from a linear sensing operator, $\cA:V\to W$, and a vector of measurements, $b = \cA(x^\natural) \in W$.
The convex optimization approach solves the following generalization of \eqref{eq:phret_intro},
\begin{align}\tag{P}
    \label{eq:abstract_problem}
    \min_{x\in V}\set{f(x):\, \begin{array}{l}
    \cA(x) = b\\
    x\in \cK
    \end{array}},
\end{align}
to recover $x^\natural$.
Here, the function $f:V\to\R$ is convex  and $\cK\subseteq V$ is a closed convex cone (possibly all of $V$). They reflect our prior knowledge of $x^\natural$, e.g., if $x^\natural$ is sparse, we set $f(x) = \|x\|_1$. 

\paragraph{A note on the use of norm} Each Euclidean space $V$ and $W$ comes with a norm $\norm{\cdot}$ that may be different from the Euclidean norm. For notational simplicity, we write all norms as $\norm{\cdot}$ with the \emph{understanding that the norm is the norm associated with the space of the argument}. 
For example if $x\in V$, then $\norm{x}$ is the norm of $x$ measured in the norm of $V$. 

Next, we define the sharpness and conditioning of \eqref{eq:abstract_problem}.

\subsection{Definition of sharpness and conditioning}

For $x\in V$ and a closed nonempty set $\cX\subseteq V$, let
\begin{align*}
	\dist(x,\cX) \coloneqq \min_{\bar x\in \cX}\norm{\bar x - x}.
\end{align*}
Again, the norm $\norm{\bar x - x}$ is measured in the space of the argument. In this case, the expression $\bar x -x \in V$ is measured in the norm of $V$.
The following definition of sharpness for a convex function (or unconstrained minimization problem) is also referred to as weak sharpness in the literature~\cite{ferris1988weak}.
\begin{definition}[Sharpness]
	\label{def:sharp_fn}
	Let $V$ be a normed Euclidean space.
	Let $f:V\to\R$ be a convex function, $\cX\subseteq V$ a nonempty convex set, and let $\mu>0$.
	We say that $f$ is $\mu$-sharp around $\cX$ if $\cX = \argmin f$ and
	\begin{align*}
		f(\bar x) - \min_{x\in V}f(x) &\geq \mu \dist(\bar x, \cX),\qquad\forall \bar x\in V.\qedhere
	\end{align*}
\end{definition}

We extend the notion of sharpness to a problem of the form \eqref{eq:abstract_problem} as follows.

\begin{definition}[Sharpness of  \eqref{eq:abstract_problem}]
	\label{def:sharpness}
	Consider a problem of the form \eqref{eq:abstract_problem}. Let $x^\natural\in V$, $\mu>0$, and $r,\,\ell\geq 0$. We say that \eqref{eq:abstract_problem} is \emph{$(\mu,r,\ell)$ sharp around $x^\natural$} if $x^\natural$ is feasible in \eqref{eq:abstract_problem} and
	\begin{align}\label{eq: MainPenalizationForm}
		F_{r,\ell}(x) \coloneqq f(x) + r\norm{\cA(x) - b} + \ell \dist(x,\cK)
	\end{align}
	is $\mu$-sharp around $x^\natural$. Equivalently, if $x^\natural$ is feasible in \eqref{eq:abstract_problem} and
	\begin{align}
		\label{eq:sharp_as_error_bound}
		\norm{x - x^\natural} \leq \frac{1}{\mu}\left(f(x) - f(x^\natural) + r \norm{\cA(x) - b} + \ell \dist(x,\cK)\right),\qquad\forall x\in V.
	\end{align}
	If $\cK=V$, then $\ell$ is inconsequential, so we will simply say \eqref{eq:abstract_problem} is $(\mu,r)$ sharp around $x^\natural$.
\end{definition}

\begin{remark}
	\label{rem:error_bounds}
   Suppose \eqref{eq:abstract_problem} is $(\mu,r,\ell)$-sharp around $x^\natural$ for some $\mu>0$, $r,\ell\geq 0$.
   Note that $x^\natural$ is necessarily the unique optimal solution of \eqref{eq:abstract_problem}. Moreover, we have an error bound: If we numerically solve \eqref{eq:abstract_problem} and produce $\tilde x\in V$, then we may bound the distance $\norm{\tilde x - x^\natural}$ according to \eqref{eq:sharp_as_error_bound}.
	This bound holds even if $f(\tilde x) < f(x^\natural)$, which could happen as $\tilde x$ is  not necessarily feasible for \eqref{eq:abstract_problem}.
\end{remark}

Recall the standard definition of the Lipschitz constant of a function. 

\begin{definition}[Lipschitz constant]
We say that a convex function $f:V\to\R$ is $L$-Lipschitz if 
\begin{align*}
	\abs{f(x) - f(y)} &\leq L \norm{x - y},\qquad\forall x, y\in V.\qedhere
\end{align*}
\end{definition}

We are now ready to define the conditioning of \eqref{eq:abstract_problem}. Note that the following definition 
depends on the norms associated with $V$ and $W$.
\begin{definition} [Conditioning and condition number]
Consider a problem of the form \eqref{eq:abstract_problem} and suppose $x^\natural\in V$ is its unique optimizer. Suppose for some $r,\ell\geq 0$ that $F_{r,\ell}$ is $\mu$-sharp around $x^\natural$ and $L$-Lipschitz, with $\mu,L>0$. The conditioning of \eqref{eq:abstract_problem} is measured by 
$(\mu,r,\ell,L)$ and the condition number is $\kappa = \frac{L}{\mu}$.
\end{definition}

\section{RIP-based sharpness and conditioning in signal recovery}
\label{sec:sharpness_examples}

This section shows that under bounds on restricted isometry (defined below), sharpness holds for three archetypal modern signal recovery problems:
sparse vector recovery, low rank matrix recovery, and covariance estimation.
Phase retrieval can also be thought of as a special case of covariance estimation. 
The signal to be recovered, the optimization 
formulation in \eqref{eq:abstract_problem}, and the choices of norms 
are the following.

\begin{definition}\label{def:optChoice}
	We consider the following signal recovery problems that can be formulated as \eqref{eq:abstract_problem} and the corresponding choices for 
	$V$, $f$, and $\cK$. 
\begin{itemize}
	\item Sparse vector recovery: recover a $k$-sparse vector $x^\natural \in V = \R^n$. In the formulation \eqref{eq:abstract_problem}, 
	the objective $f(x) = \|x\|_1$, the set $\cK = \R^n$, and the input space $V = (\R^n,\norm{\cdot}_1)$.
	\item Low rank matrix recovery
:  recover a rank $k$ matrix $X^\natural \in V = \R^{n\times N}$. In the formulation \eqref{eq:abstract_problem}, 
	the objective $f(X) = \|X\|_1$, the set $\cK = \R^n$, and the input space $V = (\R^{n\times N},\norm{\cdot}_1)$.
	\item Covariance estimation: recover a rank $k$ PSD matrix  $X^\natural \in V = \S^{n}$. In the formulation \eqref{eq:abstract_problem}, 
	the objective is the trace $f(X) = \tr (X)$, the set $\cK$ is the set of PSD matrices $\S^{n}_+$, and the input space $V = (\S^{n},\norm{\cdot}_1)$.\qedhere
\end{itemize}
\end{definition} 

The space $W$ and its norm are, as yet, unspecified.
We have some freedom in the choice of the norm as long as crucial bounds on the
restricted 
isometry property (RIP) hold in the norm on $W$.

\begin{definition}[Restricted isometry property]
Let $k'$ be a positive integer and $\cA:V\to W$ be a linear operator. We will let $\ripdown_{k'}(\cA)$ and $\ripup_{k'}(\cA)$ denote any valid \emph{uniform} lower bound and upper bound on the quantity $\norm{\cA x}/\norm{x}_2$
as $x\in V$ ranges over all elements in $V$ with support size or rank at most $k'$.
\end{definition}

Our main result of this section is the following:
	\begin{proposition}
		\label{prop:genSharpnessAndConditioning}
        Consider one of the three problems defined in \cref{def:optChoice}.
        Let $c = 1$ for sparse vector recovery and $c=2$ for the other two cases.
        Let $k'>0$ and $\epsilon>0$ and suppose 
        $\ripup_{k'}(\cA)\geq 1$ and 
		\begin{align*}\sqrt{\frac{k'}{ck}}\left(\frac{\ripdown_{ck+k'}(\cA)}{\ripup_{k'}(\cA)}\right) \geq 1 + \epsilon.
		\end{align*}
		Then, \eqref{eq:abstract_problem} is $\left(\frac{\epsilon}{2+\epsilon}, \sqrt{k'},2\right)$ sharp around $x^\natural$. Furthermore, the Lipschitz constant $L$ is no more than $3+\sqrt{k'}\ripup_{1}(\cA)$.
\end{proposition}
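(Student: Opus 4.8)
The plan is to verify the error-bound form of sharpness \eqref{eq:sharp_as_error_bound} directly. Writing $x = x^\natural + h$ and using $b = \cA(x^\natural)$ so that $\cA(x)-b = \cA(h)$, it suffices to show, for all $h$,
\[
  f(x^\natural + h) - f(x^\natural) + \sqrt{k'}\,\norm{\cA h} + 2\,\dist(x^\natural+h,\cK) \;\geq\; \tfrac{\epsilon}{2+\epsilon}\norm{h}_1 .
\]
First I would isolate a low-dimensional ``aligned'' component: let $T_0$ denote the support of $x^\natural$ in the sparse case and the rank-$\le 2k$ tangent space at $X^\natural$ in the two matrix cases, so that $h_{T_0^c}=h-h_{T_0}$ is the residual whose row and column spaces are orthogonal to those of $X^\natural$ (equal to $P_\perp h P_\perp$ in the covariance case, with $P_\perp = I - UU^\intercal$ the projection off $\range(X^\natural)$). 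The first key step is a ``gain lemma'':
\[
  f(x^\natural + h) - f(x^\natural) + \ell\,\dist(x^\natural+h,\cK) \;\geq\; \norm{h_{T_0^c}}_1 - \norm{h_{T_0}}_1, \qquad \ell = 2.
\]
For sparse and low-rank recovery ($\cK=V$, $f=\norm{\cdot}_1$) this is the usual subgradient inequality $f(x^\natural+h)\ge f(x^\natural)+\ip{G,h}$ with $G$ a sign vector / partial-isometry subgradient chosen to align with $h_{T_0^c}$, and the distance term is absent. For covariance estimation ($f=\tr$ is linear) the gain must instead be extracted from the PSD-distance penalty: decompose $\tr(h)=\tr(PhP)+\tr(P_\perp h P_\perp)$, bound $\tr(PhP)\ge -\norm{h_{T_0}}_1$, and use $\norm{P_\perp h P_\perp}_1 = \tr(P_\perp h P_\perp)+2\,\tr\big((P_\perp(X^\natural+h)P_\perp)_-\big)$ together with the compression inequality $\tr\big((P_\perp X P_\perp)_-\big)\le \tr(X_-)=\dist(X,\S^n_+)$.

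Next I would run the classical block-RIP argument to control $\norm{h_{T_0}}_1$. Sorting the entries (resp.\ the singular values via the SVD) of $h_{T_0^c}$ in decreasing order and partitioning into blocks $T_1,T_2,\dots$ of size/rank $k'$ gives $\norm{h_{T_{j+1}}}_2 \le \norm{h_{T_j}}_1/\sqrt{k'}$, hence $\sum_{j\ge2}\norm{h_{T_j}}_2 \le \norm{h_{T_0^c}}_1/\sqrt{k'}$. Writing $h_{T_{01}}=h_{T_0}+h_{T_1}$ (support/rank $\le ck+k'$) and applying the RIP bounds through $\cA h_{T_{01}}=\cA h - \sum_{j\ge2}\cA h_{T_j}$ (triangle inequality in $W$) yields
\[
  \norm{h_{T_0}}_1 \le \sqrt{ck}\,\norm{h_{T_{01}}}_2 \le \tfrac{\sqrt{ck}}{\ripdown_{ck+k'}(\cA)}\Big(\norm{\cA h} + \tfrac{\ripup_{k'}(\cA)}{\sqrt{k'}}\norm{h_{T_0^c}}_1\Big).
\]
Setting $\theta=\sqrt{ck}/\ripdown_{ck+k'}(\cA)$ and $\gamma=\theta\,\ripup_{k'}(\cA)/\sqrt{k'}$, the hypothesis reads exactly $\gamma\le 1/(1+\epsilon)$, and since $\ripup_{k'}(\cA)\ge1$ it also forces $\theta \le \sqrt{k'}/(1+\epsilon)=r/(1+\epsilon)$.

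Finally I would combine. With $a=\norm{h_{T_0}}_1$, $\beta=\norm{h_{T_0^c}}_1$, $D=\norm{\cA h}$, the gain lemma together with $r=\sqrt{k'}$ lower-bounds the left-hand side by $(1-\gamma)\beta+(r-\theta)D$, while $\norm{h}_1=a+\beta\le \theta D+(1+\gamma)\beta$. Matching coefficients, it suffices that $\mu\le(1-\gamma)/(1+\gamma)$ and $\mu\le r/\theta-1$; the bounds $\gamma\le 1/(1+\epsilon)$ and $\theta\le r/(1+\epsilon)$ make both right-hand sides at least $\epsilon/(2+\epsilon)$, so $\mu=\epsilon/(2+\epsilon)$ works. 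For the Lipschitz constant, $f$ is $1$-Lipschitz in $\norm{\cdot}_1$ (either as a norm or since $|\tr M|\le\norm{M}_1$), $\dist(\cdot,\cK)$ is $1$-Lipschitz so its term contributes $\ell=2$, and expanding in the singular/standard basis gives $\norm{\cA z}\le \ripup_1(\cA)\norm{z}_1$, making $r\norm{\cA\cdot-b}$ a $\sqrt{k'}\,\ripup_1(\cA)$-Lipschitz term; summing yields $L\le 3+\sqrt{k'}\,\ripup_1(\cA)$. The main obstacle I anticipate is the gain lemma in the covariance case: unlike the $\norm{\cdot}_1$-objective cases where a single subgradient inequality suffices, here the linear trace objective must be traded against the PSD-distance penalty, and one must verify that the compression inequality produces exactly the constant $\ell=2$; phrasing the $T_0$/$T_0^c$ decomposition uniformly across all three problems is the bookkeeping-heavy part.
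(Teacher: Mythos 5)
Your proposal is correct, and its core is the same argument as the paper's: your tangent-space split $h_{T_0},h_{T_0^c}$ is exactly the paper's decomposition into the arrow-shaped block $\begin{smallpmatrix}\Delta_{1,1}&\Delta_{1,2}\\ \Delta_{2,1}&0\end{smallpmatrix}$ and the residual block $\begin{smallpmatrix}0&0\\ 0&\Delta_{2,2}\end{smallpmatrix}$ after its change of basis; the size-$k'$ partition of the residual's singular values, the RIP-plus-triangle-inequality bound on the aligned part, the closing algebra giving $\mu=\epsilon/(2+\epsilon)$, and the rank-one-decomposition Lipschitz bound $L\le 3+\sqrt{k'}\ripup_1(\cA)$ all match the paper (your subgradient-based gain inequality is interchangeable with the paper's triangle-inequality step).

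The one genuine divergence is the covariance case, which you correctly flagged as the delicate step. You prove the gain lemma directly: splitting $\tr(h)$ via $P$ and $P_\perp$, using $\norm{M}_1=\tr(M)+2\tr(M_-)$ for symmetric $M$, and the compression inequality $\tr\bigl((P_\perp X P_\perp)_-\bigr)\le\tr(X_-)=\dist_{\norm{\cdot}_1}(X,\S^n_+)$; this is valid (e.g., via $\tr(M_-)=\max_{0\preceq Q\preceq I}\ip{Q,-M}$ and $0\preceq P_\perp QP_\perp\preceq I$). The paper sidesteps all of this with a one-line reduction: since $\lambda+2\lambda_-=\abs{\lambda}$ eigenvalue-wise, one has $\tr(X)+2\dist_{\norm{\cdot}_1}(X,\S^n_+)\ge\norm{X}_1$, with equality at the PSD matrix $X^\natural$, so sharpness of $\norm{X}_1+\sqrt{k'}\norm{\cA(X)-b}$ (established by running the matrix-sensing proof verbatim on $\S^n$) transfers immediately to the penalized trace formulation. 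Your route costs an extra lemma but localizes the argument to the perturbation $h$; the paper's reduction is shorter and makes transparent that $\ell=2$ is precisely the factor needed to convert trace into nuclear norm. Both are sound.
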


To put this result in context, we will see in \Cref{sec:Sharpnessconditioningandsamplecomplexity} that the premise of \Cref{prop:genSharpnessAndConditioning} holds once the sample size $m$ is a small multiple of the recovery threshold w.h.p.\ with $\epsilon = 2$ and $k' = O(k)$, and $\ripup_1(\cA)=O(1)$.
Hence, Problem \eqref{eq:abstract_problem}  is  $\left(\frac{1}{2},O\left(\sqrt{k}\right),2\right)$ sharp around $x^\natural$ with a Lipschitz constant no more than $O\left(\sqrt{k}\right)$ w.h.p.

The rest of the section consists of proofs of 
\Cref{prop:genSharpnessAndConditioning} for the three different settings. We 
start with the proof of sharpness of matrix sensing and discuss how this proof 
can be modified for the other two settings. We then bound the Lipschitz constant for all three settings.

\subsection{Proof of sharpness in matrix sensing}
\label{subsec:mat_sens}
This section proves \cref{prop:genSharpnessAndConditioning} for the setting of low-rank matrix sensing and uses many elements of the proofs for exact recovery \cite{candes2005decoding,recht2010guaranteed}.

	Let $\Delta\in\R^{n\by N}$ be arbitrary. Our goal is to show that
	\begin{equation}\label{eq: sharpness_goal}
		\norm{X^\natural + \Delta}_1 + \sqrt{k'} \norm{\cA(\Delta)} - \norm{X^\natural}_1 \geq \frac{\epsilon}{2+\epsilon}\norm{\Delta}_1.
	\end{equation}
	
\paragraph{Change of basis} Without loss of generality, we may work in a basis such that
	\begin{align*}
		X^\natural = \begin{pmatrix}
			X^\natural_{1,1} & 0_{k\by (N-k)}\\
			0_{(n-k)\by k} & 0_{(n-k)\by (N-k)}
		\end{pmatrix}
		\qquad\text{and}\qquad
		\Delta = \begin{pmatrix}
			\Delta_{1,1} & \Delta_{1,2}\\
			\Delta_{2,1} & \Delta_{2,2}
		\end{pmatrix},
	\end{align*}
	where $X^\natural_{1,1}$ is a $k\by k$ diagonal matrix and $\Delta_{2,2}\in\R^{(n-k)\by (N-k)}$ is diagonal with diagonal entries that are nonincreasing in magnitude.
	
\paragraph{Partitioning of the difference $\Delta$}	Let $\delta_{k^\perp}\coloneqq \diag(\Delta_{2,2})$. The rest of the proof is 
	based on the idea in \cite{candes2005decoding,recht2010guaranteed}.
	We decompose $\delta_{k^\perp} = \sigma_1 + \dots + \sigma_t$ where each $\sigma_i \in\R^{\min(n,N) - k}$. Specifically, let $\sigma_1$ extract the first $k'$ coordinates of $\delta_{k^\perp}$, let each subsequent $\sigma_i$ extract the next $k'$ coordinates of $\delta_{k^\perp}$. Finally, $\sigma_t$ may have fewer than $k'$ coordinates of $\delta_{k^\perp}$.
	Let $\Sigma_i$ denote the matrix of size ${n\by N}$ with 
	$\Diag(\sigma_i)$ in its bottom-right $(n-k)\by (N-k)$ block.
	
\paragraph{Lower bound on $\norm{\Delta_{2,2}}_1$} Since $\sigma_i$ are disjoint, we can lower bound $\norm{\Delta_{2,2}}_1 = \norm{\delta_{k^\perp}}_1$ by 
\begin{equation*}
		\norm{\Delta_{2,2}}_1 =
	\norm{\delta_{k^\perp}}_1 
	\geq \sum_{i=1}^{t-1} \norm{\sigma_i}_1= k'\sum_{i=1}^{t -1} \frac{\norm{\sigma_i}_1}{k'}.
\end{equation*}
Using $\delta_{k^\perp}\text{ is nonincreasing}$ and each $\sigma_i$ is $k'$ sparse, we further have 
	\begin{equation*}
		\norm{\Delta_{2,2}}_1	\geq k'\sum_{i=2}^{t}\norm{\sigma_i}_\infty 
		\geq \sqrt{k'}\sum_{i=2}^t \norm{\sigma_i}_2.
		\end{equation*} 
	Next, we use the RIP to obtain
	\begin{equation*} 
		\norm{\Delta_{2,2}}_1	\geq \frac{\sqrt{k'}}{\ripup_{k'}}\sum_{i=2}^t \norm{\cA(\Sigma_i)}
\overset{(a)}{\geq} \frac{\sqrt{k'}}{\ripup_{k'}}\left(\norm{\cA\begin{pmatrix}
				\Delta_{1,1} & \Delta_{1,2}\\
				\Delta_{2,2} & 0
			\end{pmatrix} + \cA(\Sigma_1)} - \norm{\cA(\Delta)}\right),
		\end{equation*}
where $(a)$ is due to triangle inequality. Using the RIP condition again and  $\ripup_{k'}\geq 1$, 
we have 
	\begin{align*} 
		\norm{\Delta_{2,2}}_1 &	\geq \sqrt{k'} \frac{\ripdown_{2k+k'}}{\ripup_{k'}}\norm{\begin{pmatrix}
				\Delta_{1,1} & \Delta_{1,2}\\
				\Delta_{2,1} & 0
		\end{pmatrix} + \Sigma_1}_2 - \sqrt{k'}\norm{\cA(\Delta)} \\
		& \geq \sqrt{k'} \frac{\ripdown_{2k+k'}}{\ripup_{k'}}\norm{\begin{pmatrix}
				\Delta_{1,1} & \Delta_{1,2}\\
				\Delta_{2,1} & 0
		\end{pmatrix}}_2 - \sqrt{k'}\norm{\cA(\Delta)}. 
	\end{align*}
Lastly, since $\begin{pmatrix}
	\Delta_{1,1} & \Delta_{1,2}\\
	\Delta_{2,1} & 0
\end{pmatrix}$ has rank no more than $2k$, we have 
\begin{align*}
	\norm{\Delta_{2,2}}_1		&\geq \sqrt{\frac{k'}{2k}}\frac{\ripdown_{2k+k'}}{\ripup_{k'}}\norm{\begin{pmatrix}
				\Delta_{1,1} & \Delta_{1,2}\\
				\Delta_{2,1} & 0
		\end{pmatrix}}_1 - \sqrt{k'}\norm{\cA(\Delta)}\\
		&\geq (1+\epsilon)\norm{\begin{pmatrix}
				\Delta_{1,1} & \Delta_{1,2}\\
				\Delta_{2,1} & 0
		\end{pmatrix}}_1 - \sqrt{k'}\norm{\cA(\Delta)}.
	\end{align*}
	
\paragraph{Putting things together}	We are now ready to prove sharpness:
	\begin{align*}
		& \norm{X^\natural + \Delta}_1 + \sqrt{k'}\norm{\cA(\Delta)} - \norm{X^\natural}_1\\
		\qquad \geq & \norm{\begin{pmatrix}
				X_{1,1}^\natural & 0\\
				0 & \Delta_{2,2}
		\end{pmatrix}}_1-\norm{X^\natural}_1 - \norm{\begin{pmatrix}
				\Delta_{1,1} & \Delta_{1,2}\\
				\Delta_{2,1} & 0
		\end{pmatrix}}_1\\
        & +\sqrt{k'}\norm{\cA(\Delta)}\\
\geq & \norm{\delta_{k^\perp}}_1 - \norm{\begin{pmatrix}
				\Delta_{1,1} & \Delta_{1,2}\\
				\Delta_{2,1} & 0
		\end{pmatrix}}_1 +\sqrt{k'}\norm{\cA(\Delta)} && (\text{disjoint supports})\\
	= &  \left(\frac{2}{2+\epsilon} + \frac{\epsilon}{2+\epsilon}\right)\norm{\delta_{k^\perp}}_1 - \norm{\begin{pmatrix}
				\Delta_{1,1} & \Delta_{1,2}\\
				\Delta_{2,1} & 0
		\end{pmatrix}}_1 \\
    & +\sqrt{k'}\norm{\cA(\Delta)}\\
	\geq &  \left(\frac{2}{2+\epsilon}(1+\epsilon) - 1\right)\norm{\begin{pmatrix}
				\Delta_{1,1} & \Delta_{1,2}\\
				\Delta_{2,1} & 0
		\end{pmatrix}}_1 + \frac{\epsilon}{2+\epsilon}\norm{\delta_{k^\perp}}_1 && \text{(bound on $\norm{\delta_{k^\perp}}_1$)}\\
&+ \left(1 - \frac{2}{2+\epsilon}\right)\sqrt{k'}\norm{\cA(\Delta)}\\
		\geq & \frac{\epsilon}{2+\epsilon}\norm{\Delta}_1.
	\end{align*}
	This proves the claim \eqref{eq: sharpness_goal} as $\Delta\in \R^{n\by N}$ was arbitrary.

\subsection{Sharpness in sparse vector recovery}
\label{subsec:sparse_recovery}

One can follow the proof for matrix sensing almost verbatim by treating the signal $x^\natural$ 
	and the sensing vectors in $\cA$ as diagonal matrices. The change from $c=2$ to $c=1$ is justified by
	noting there is no extra off-diagonal term for the matrices considered in the argument. Thus, the matrix $\begin{smallpmatrix}\Delta_{1,1} & \Delta_{1,2}\\ \Delta_{2,1} & 0\end{smallpmatrix} = \begin{smallpmatrix}\Delta_{1,1} & \\ & 0 \end{smallpmatrix}$
    has rank bounded by $k$ instead of $2k$.

\subsection{Sharpness in covariance estimation}
\label{subsec:covariance}

We may repeat the proof of \Cref{subsec:mat_sens} verbatim after replacing $V = (\R^{n\by N},\norm{\cdot}_1)$ by $V = (\S^n,\norm{\cdot}_1)$. We deduce that, under the assumptions of this proposition,
\begin{align*}
\norm{X}_1 + \sqrt{k'}\norm{\cA(X) - b}
\end{align*}
is $\frac{\epsilon}{2+\epsilon}$ sharp around $X^\natural$.
Next, note that $\tr(X) + 2\dist(X, \S^n_+)\geq \norm{X}_1$. We conclude that
\begin{align*}
    \tr(X) + 2\dist(X, \S^n_+) +\sqrt{k'}\norm{\cA(X) - b}
\end{align*}
is $\frac{\epsilon}{2+\epsilon}$ sharp around $X^\natural$.

\subsection{Proof of Lipschitz continuity}
\label{subsec:lipschitz}

We need to show that 
\begin{equation}\label{eq: lipschitz_claim}
    f(x) + \sqrt{k'}\norm{\cA(x) -b} + 2\dist(x,K)
\end{equation}
is $3+\sqrt{k'}\ripup_1(\cA)$ Lipschitz with respect to $\norm{\cdot}_1$. 

By the triangle inequality, the objective functions $f(x) = \norm{x}_1$ and $f(X) = \tr(X)$ are both $1$-Lipschitz in terms of $\norm{\cdot}_1$.
In the covariance estimation setting, the distance function $\dist(X,\mathbb{S}_+^n)$ is also $1$-Lipschitz in terms of $\norm{\cdot}_1$ by the triangle inequality.

Next, we show the function $\norm{\cA(x)-b}$ is $\ripup_1$ Lipschitz. Indeed, for any $x_1,x_2\in V$, let $\Delta = x_1-x_2 = \sum_{i=1}^n \Delta_i$, where $\Delta_i$ are $1$-sparse or has rank no more than $1$ (due to singular value decomposition). We have 
\begin{align*}
&|\norm{\cA (x_1)-b} - \norm{\cA(x_2)-b}| \\
\leq & \norm{\cA(x_1 - x_2)} = \norm{\cA \left(\sum_i^n \Delta_i\right)} \leq \sum_{i=1}^n \norm{\cA \Delta_i} \\
\overset{(a)}{\leq} & 
\sum_{i=1}^n \ripup_1 \norm{\Delta_i}_2 \overset{(b)}{=} \sum_{i=1}^n \ripup_1 \norm{\Delta_i}_1 = \ripup_1 \norm{\Delta}_1.
\end{align*}
Here step $(a)$ and $(b)$ are due to the RIP and $\Delta_i$, $i=1,\dots,n$ are 1-sparse or have rank no more than $1$. 

Since $f(x)$ and $\dist(x,K)$ are $1$ Lipschitz and $\norm{\cA(x)-b}$ is $\ripup_1$ Lipschitz, our proof that the function in \eqref{eq: lipschitz_claim} is $3+\sqrt{k'}\ripup_1(\cA)$-Lipschitz is complete.

\section{Conditioning, sensing models, and sample complexity}
\label{sec:Sharpnessconditioningandsamplecomplexity}
In this section, we describe different sensing models of $\cA$ and show that once $m$ exceeds certain thresholds, Problem \eqref{eq:abstract_problem}  is well-conditioned in terms of $\ell_1$ norm with $W = (\R^m,\norm{\cdot}_p)$ where $p=1,2$. 

To prove this result, we first describe the precise sensing model in \Cref{sec: sensing_model_sample_complexity}. Then we collect bounds on the RIP from the literature that ensure that the premise of \Cref{prop:genSharpnessAndConditioning} is satisfied. Due to an additional technicality in one of the sensing models (labeled ``Covariance Estimation I'' below), we give a separate proof of its well-conditioning in \Cref{sec: proof of covariance estimation}. 

The sensing map $\cA$ is described by its adjoint map $\cA^*$. More precisely, we equip $\mathbb{S}^n$ and $\mathbb{R}^{n \times N}$ with the trace inner product and equip $\mathbb{R}^n$ and $\mathbb{R}^m$ with the dot product. Then the adjoint map $\cA^*$ is well-defined and we decribe how $\cA^*(e_i) \in V$, $i=1,\dots,m$ is generated where $e_i$ is the $i$-th standard basis of $\R^m$.

A summary of the sensing model, the thresholds, and the norms on $W = \R^m$ is described in Table \ref{tb: TaskRIPmCondition}. Our main result of this section is the following theorem. 

\begin{theorem}
	\label{thm: sampleVSconditioning}
Suppose the space $W$ and the sampling map $\cA$ are described according to one of the scenarios in Table \ref{tb: TaskRIPmCondition} and
 the signal $x^\natural$ is either $k$-sparse or has rank no more than $k$. If $m\ge CT(n,N,k)$, where 
 $T(n,N,k)$ is defined in Table \ref{tb: TaskRIPmCondition} and $C$ is a numerical constant, then
 there are numerical constants $c_1,c_2>0$  such that  
w.h.p.\ the optimization problem \eqref{eq:abstract_problem} is $(\frac{1}{2}, \sqrt{c_1k}, 2)$-sharp
around $x^\natural$ and $F_{\sqrt{c_1k},2}$ has a Lipschitz constant bounded by $\sqrt{c_2k}$. Consequently, 
\eqref{eq:abstract_problem} has a condition number $\kappa$ bounded by $2\sqrt{c_2k}$. 
\end{theorem}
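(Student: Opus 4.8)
The plan is to reduce the entire statement to a single verification of the hypotheses of \Cref{prop:genSharpnessAndConditioning}, pushing all the randomness into standard RIP estimates. Concretely, I would fix the choice $\epsilon = 2$ (so that $\frac{\epsilon}{2+\epsilon} = \frac12$) and $k' = c_1 k$ for a constant $c_1$ to be determined, and show that for this choice the premise
\[
    \ripup_{k'}(\cA) \geq 1 \quad\text{and}\quad \sqrt{\frac{k'}{ck}}\left(\frac{\ripdown_{ck+k'}(\cA)}{\ripup_{k'}(\cA)}\right) \geq 3
\]
holds w.h.p.\ once $m \geq C\,T(n,N,k)$, where $c\in\{1,2\}$ is the model-dependent constant of \Cref{prop:genSharpnessAndConditioning}. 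Granting this, \Cref{prop:genSharpnessAndConditioning} immediately delivers that \eqref{eq:abstract_problem} is $(\tfrac12, \sqrt{c_1 k}, 2)$-sharp around $x^\natural$, which is the sharpness half of the theorem.

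The first substantive step is to collect, for each row of \Cref{tb: TaskRIPmCondition}, the known RIP bounds under the corresponding Gaussian sensing model. In each case a covering/union-bound argument shows that, for any prescribed distortion $\eta\in(0,1)$, once $m$ exceeds a (sufficiently large, $\eta$-dependent) constant multiple of the threshold $T(n,N,k)$, one has w.h.p.\ the two-sided estimate $1-\eta \leq \ripdown_s(\cA) \leq \ripup_s(\cA) \leq 1+\eta$ simultaneously at the relevant level $s = ck + k' = (c+c_1)k = O(k)$. Taking $\ripup_{k'}(\cA) := 1+\eta$ (legitimate since $\ripup$ denotes \emph{any} valid uniform upper bound, so in particular the condition $\ripup_{k'}(\cA)\geq 1$ is automatic), the ratio in the premise is at least $\tfrac{1-\eta}{1+\eta}$, and since $\sqrt{k'/(ck)} = \sqrt{c_1/c}$, the left-hand side is bounded below by $\sqrt{c_1/c}\cdot\tfrac{1-\eta}{1+\eta}$. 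I then choose $\eta$ small and $c_1$ large (both depending only on $c$) so that this exceeds $3$, validating the premise with $\epsilon = 2$.

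For the Lipschitz constant, \Cref{prop:genSharpnessAndConditioning} gives $L \leq 3 + \sqrt{k'}\,\ripup_1(\cA) = 3 + \sqrt{c_1 k}\,\ripup_1(\cA)$. The quantity $\ripup_1(\cA)$ only bounds $\norm{\cA x}/\norm{x}_2$ over single coordinates or rank-one directions, so it concentrates around a model-dependent constant and is $O(1)$ w.h.p.; absorbing the additive $3$ then yields $L \leq \sqrt{c_2 k}$ for a suitable numerical constant $c_2$. The condition number bound follows at once: $\kappa = L/\mu \leq \sqrt{c_2 k}/(\tfrac12) = 2\sqrt{c_2 k}$.

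I expect the main obstacle to be the ``Covariance Estimation I'' scenario, where $\cA^*(e_i) = g_i g_i^\intercal$ (as in phase retrieval). There the natural norm on $W$ is $\norm{\cdot}_1$ rather than $\norm{\cdot}_2$, and the clean two-sided $\ell_2$-RIP used above is unavailable: the products $g_i^\intercal X g_i$ are heavy-tailed, so the upper constant $\ripup$ in the $\ell_2$ sense is not $O(1)$. Instead one must establish an $\ell_1$-type restricted isometry of the form $\ripdown_{O(k)}\,\norm{X}_2 \lesssim \tfrac1m\sum_i \abs{g_i^\intercal X g_i} \lesssim \ripup_{O(k)}\,\norm{X}_2$ on the relevant low-rank set, along with the matching $O(1)$ bound on $\ripup_1$, via a concentration argument tailored to this sensing model. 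I would isolate this case and prove it separately in \Cref{sec: proof of covariance estimation}, then feed the resulting RIP constants back into \Cref{prop:genSharpnessAndConditioning} exactly as in the other scenarios.
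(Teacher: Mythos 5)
Your overall architecture matches the paper's: fix $\epsilon=2$, take $k'$ to be a large constant multiple of $k$, verify the premise of \cref{prop:genSharpnessAndConditioning} w.h.p.\ from known RIP estimates, and isolate Covariance Estimation I for separate treatment. However, one probabilistic claim you rely on is false as stated. In the models where $W$ carries the $\ell_1$ norm and the measurements are rank-one projections (Low rank matrix sensing II and Covariance estimation II), the two-sided estimate $1-\eta \leq \ripdown_s(\cA) \leq \ripup_s(\cA) \leq 1+\eta$ for \emph{arbitrary} prescribed $\eta$ is unattainable no matter how large $m$ is: the population quantity $\E\abs{a^\intercal X b}$ is not proportional to $\norm{X}_2$ uniformly over low-rank $X$, but depends on the spectrum (it is $\approx \tfrac{2}{\pi}\norm{X}_2$ for rank-one $X$ versus $\approx \sqrt{2/\pi}\,\norm{X}_2$ for flat-spectrum higher-rank $X$), so the ratio $\ripup/\ripdown$ stays bounded away from $1$ by an absolute constant. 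This is exactly why the literature the paper cites (Cai--Zhang) only provides $c_2/c_1 \leq 4$, and why the paper satisfies the premise solely by taking $k' = C_1 k$ large, never by shrinking the distortion. This particular defect is repairable, since your argument ultimately only needs a bounded ratio together with $c_1$ large.

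The genuine gap is your plan for Covariance Estimation I. You attribute the difficulty to heavy tails of $g_i^\intercal X g_i$, but the real obstruction is \emph{bias}: $\E\ip{a_ia_i^\intercal, X} \propto \tr(X)$, so for a flat-spectrum PSD matrix $X$ of rank $k'$ one has $\norm{\cA(X)}_1 \approx \tr(X) = \sqrt{k'}\,\norm{X}_2$ w.h.p. Hence any valid $\ripup_{k'}(\cA)$ necessarily grows like $\sqrt{k'}$, and the two-sided $\ell_1$ restricted isometry with $O(1)$ constants that you propose to establish by concentration simply does not exist; no concentration argument can remove the mean. Consequently ``feeding the resulting RIP constants back into \cref{prop:genSharpnessAndConditioning}'' cannot work: its premise would require $\sqrt{k'/(ck)}\cdot\ripdown_{ck+k'}/\ripup_{k'} \geq 3$, which fails identically once $\ripup_{k'} \asymp \sqrt{k'}$ (the paper states this obstruction explicitly). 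The paper's actual fix is a de-biasing device: pair and difference the measurements, $\cB^*(e_i) = \tfrac12 a_{2i-1}a_{2i-1}^\intercal - \tfrac12 a_{2i}a_{2i}^\intercal$, which is distributed as an instance of Covariance Estimation II (mean zero, so its $\ell_1$ RIP constants are $O(1)$); apply the already-proved case to get sharpness of the $\cB$-penalized objective; then transfer sharpness back to the $\cA$-penalized objective via the pointwise bound $\abs{\ip{\cB^*(e_i),X} - d_i} \leq \tfrac12\abs{\ip{\cA^*(e_{2i-1}),X} - b_{2i-1}} + \tfrac12\abs{\ip{\cA^*(e_{2i}),X} - b_{2i}}$, at the cost of halving $r$. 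The Lipschitz constant in this setting is also proved separately, from the Cand\`es--Li bound $\norm{\cA(X)}_1 \leq 1.1\norm{X}_1$, rather than from $\ripup_1$ through \cref{prop:genSharpnessAndConditioning}. Without some such symmetrization step, your treatment of this row of Table \ref{tb: TaskRIPmCondition} would fail.
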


\begin{table}[h]
    \centering
	\begin{tabular}{|c|c|c|c|} 
		\hline
		Task & $\cA^*(e_i)$ & $T(n,N,k)$ & norms of $W=\R^m$\\
		\hline
		\hline 
	Sparse vector recovery& $a_i \in \R^n$ & $k\log(n/k)$ & $\ell_1$ or $\ell_2$  \\ 
	Low rank matrix sensing I& $A_i\in \R^{n\times N}$ & $\max\{n,N\}k$ & $\ell_1$ or $\ell_2$\\ 
	Low rank matrix sensing II & $a_ib_i^\top\in \R^{n\times N}$ & $\max\{n,N\}k$&$\ell_1$  \\ 
	Covariance estimation I& $a_ia_i^\top\in \S^{n}$ & $nk$ & $\ell_1$ \\ 
	Covariance estimation II& $a_ia_i^\top -b_ib_i^\top \in \S^n$ & $nk$ & $\ell_1$ \\ 
		\hline
	\end{tabular}
\caption{Description of different statistical signal recovery tasks and the thresholds for well-conditioning. The entries of $A_i, a_i,$ and $b_i$ are i.i.d. 
Gaussian random variables with appropriate scaling (see Section \ref{sec: sensing_model_sample_complexity}). The conditioning of \eqref{eq:abstract_problem} is measured by $(\frac{1}{2},\sqrt{c_1k},2,\sqrt{c_2k})$ where $c_1,c_2>0$ are numerical constants.}\label{tb: TaskRIPmCondition}
\end{table}

\subsection{Sensing model, sample complexity, and proof via RIP}
\label{sec: sensing_model_sample_complexity}

In this section, we first describe the sensing models and the thresholds $T(n,N,k)$ on $m$ so that bounds on RIP hold for $\cA$ when $W=\R^m$ is equipped with either the $\ell_1$ or $\ell_2$ norms. Then, we prove \Cref{thm: sampleVSconditioning} by verifying the premise of \Cref{prop:genSharpnessAndConditioning}.

Recall that $\ripdown_{k'}(\cA)$ and $\ripup_{k'}(\cA)$ are any uniform lower and upper bound on $\norm{\cA x}/\norm{x}_2$ as $x$ ranges over elements of $V$ with support or rank bounded by $k'$.
They will be set to numerical constants $c_1,c_2$ below and could differ for different sensing models.
The norm of $W = \R^m$ will be either the $\ell_1$ norm or $\ell_2$ norm below.

\paragraph{Sparse vector recovery} For sparse vector recovery, 
the measurements are of the form
\begin{equation*}
 \cA^*(e_i) = a_i, \quad i = 1,\dots,m,
\end{equation*}
where the measurement vectors $a_i\overset{\text{i.i.d.}}{\sim} N\left(0,I/m\right)$ in the $\ell_2$ setting and $a_i\overset{\text{i.i.d.}}{\sim} N\left(0,I/m^2\right)$ in the $\ell_1$ setting.
In both settings, we may set w.h.p.\ 
$\ripup_{k'}\left(\cA\right) = c_2$ and $\ripdown_{k'}\left(\cA\right) = c_1$ as long as $m\gtrsim k'\log (\frac{n}{k'})$,\footnote{We write $a\gtrsim b$ if $a\ge Cb$ for some numerical constant  $C>0$.}
where $c_1$ and $c_2$ are constants independent of $k'$ satisfying $c_2/c_1 \leq 1.1$ and $c_2\geq 1$.
This fact is proved in the $\ell_2$ setting following 
\cite{candes2006stable,edelman1988eigenvalues} using results on singular values of random matrices; a simple proof 
can be found in \cite[Theorem 5.2]{baraniuk2008simple}. 
This fact can be proved in the $\ell_1$ setting using \cite[Lemma 2.1]{plan2014dimension} and \cite[Lemma 4.4]{rudelson2006sparse}.  \footnote{Specifically, \cite[Note that Lemma 2.1]{plan2014dimension} provides a RIP bound with a quantity called Guassian width. One then use \cite[Lemma 4.4]{rudelson2006sparse} to estimate the width and obtain the concrete RIP above.}

\paragraph{Matrix sensing I} For this scenario of matrix sensing, 
the measurements are of the form
\begin{equation*}
	\cA^*(e_i) = A_i, \quad i = 1,\dots,m,
\end{equation*}
where each measurement matrix $A_i \in \R^{n\times N}$ has Gaussian entries. Each entry of each matrix is sampled i.i.d.\
according to $N\left(0, 1/m\right)$ in the $\ell_2$ setting, and according to $N\left(0,1/m^2\right)$ in the $\ell_1$ setting.
In both settings, we may set w.h.p.\  $\ripup_{k'}(\cA) = c_2$ and $\ripdown_{k'}(\cA) = c_1$ as long as $m \gtrsim k' \max(n,N)$,
where $c_1$ and $c_2$ are constants independent of $k'$ satisfying $c_2/c_1 \leq 1.1$ and $c_2\geq 1$.
This fact is proved in the $\ell_2$ setting
in \cite[Theorem 2.3]{candes2011tight}.
This fact is proved in the $\ell_1$ setting
in \cite[Proposition 1]{li2020nonconvex}.

\paragraph{Matrix sensing II} For this version of matrix sensing, which is more commonly known as 
bilinear sensing,  
the measurements are of the form
\begin{equation*}
	\cA^*(e_i) = a_i b_i^\top , \quad i = 1,\dots,m,
\end{equation*}
and the measurement vectors $a_i\overset{\text{i.i.d.}}{\sim} N(0,I_n/m)$ and $b_i\overset{\text{i.i.d.}}{\sim} N(0,I_N/m)$.
We equip $W=\R^m$ with the $\ell_1$ norm.
In this setting, we may set w.h.p.\ $\ripup_{k'}(\cA) = c_2$ and $\ripdown_{k'}(\cA) = c_1$ as long as 
$m\gtrsim k'\max(n,N)$,
where $c_1$ and $c_2$ and constants independent of $k'$ satisfying $c_2/c_1\leq 4$ and $c_2\geq 1$.
This fact is proved according to \cite[Theorem 2.2]{cai2015rop}.

\paragraph{Covariance estimation I} For this scenario of covariance estimation, 
the measurements are of the form
\begin{equation*}
	\cA^*(e_i) = a_i a_i^\top , \quad i = 1,\dots,m,
\end{equation*}
where each measurement vector $a_i\overset{\text{i.i.d.}}{\sim} N(0,I_n/m)$. 
We equip $W=\R^m$ with the $\ell_1$ norm.
The proof of \cref{thm: sampleVSconditioning} for this setting differs from the proof of \cref{thm: sampleVSconditioning} for all other settings.
This difference stems from the fact that
$\ip{\cA^*(e_i), X^\natural}$ does not have zero mean, thus biasing the output vector as 
discussed in \cite[Section III.B]{chen2015exact}.
Attempting to follow the same proof strategy will need $\ripup_{k'}(\cA)$ to be a numerical constant. However, the quantity $\ripup_{k'}(\cA)$ scales as $\sqrt{k'}$. This prevents us from applying \cref{prop:genSharpnessAndConditioning} directly in this setting.

We provide a separate proof for \cref{thm: sampleVSconditioning}(Covariance estimation I) in \Cref{sec: proof of covariance estimation}. The proof in this setting is completed by analyzing the conditioning of a related model of covariance estimation (Covariance estimation II).

\paragraph{Covariance estimation II} For this scenario of covariance estimation, 
the measurements are of the form
\begin{equation*}
	\cA^*(e_i) = a_i a_i^\top  - b_ib_i^\top, \quad i = 1,\dots,m,
\end{equation*}
where each measurement vector pair $a_i,b_i\overset{\text{i.i.d.}}{\sim} N(0,I_n/(2m))$. 
We equip $W=\R^m$ with the $\ell_1$ norm.
In this setting, we may set w.h.p.\ $\ripup_{k'}(\cA) = c_2$ and $\ripdown_{k'}(\cA) = c_1$ as long as  $m\gtrsim n k'$. Here, $c_1$ and $c_2$ are constants independent of $k'$ satisfying $c_2/c_1\leq 4$ and $c_2 \geq 1$. To prove this fact, consider the operator $\bar{\cA}: \mathbb{R}^{n\times n} \rightarrow W$ such that $\bar{\cA}^*(e_i) =  (a_i - b_i)(a_i+b_i)^\top$. Note that $a_i-b_i$ and $a_i+b_i$ are independent of each other and $a_i-b_i, a_i+b_i\overset{\text{i.i.d.}}{\sim} N(0,I_n/m)$ for $i=1,\dots,m$. Using  \cite[Theorem 2.2]{cai2015rop}, we see that RIP holds for $\bar{\cA}$ w.h.p. with $\ripup_{k'}(\bar{\cA}) = c_2$ and $\ripdown_{k'}(\bar{\cA}) = c_1$ as long as  $m\gtrsim n k'$. The proof is completed by noting that 
$[\bar{\cA}(X)]_i = \langle (a_i - b_i)(a_i+b_i)^\top,X\rangle = [\cA(X)]_i$ for any $X \in \S^{n}$.

The Gaussian assumption on the sensing vectors or matrices are assumed for simplicity. One can relax this condition
to sub-Gaussian distributions as done in  \cite[Theorem 5.2]{baraniuk2008simple}, \cite[Lemma 2.1]{plan2014dimension}, and \cite[Theorem 6.4]{charisopoulos2021low}.

\begin{proof}[Proof of Theorem \ref{thm: sampleVSconditioning} for all settings except covariance estimation I]
    As described above, for all settings except covariance estimation I, as long as $m\gtrsim T(n,N,k')$, we may set w.h.p.\ $\ripup_{k'}(\cA) = c_2$ and $\ripdown_{k'}(\cA) = c_1$. By setting $k' =\sqrt{C_1k}$ for a large numerical constant $C_1$, we see the premise of \Cref{prop:genSharpnessAndConditioning} is satisfied and our proof is complete.
\end{proof}

\subsection{Well-conditioning of covariance estimation I}
\label{sec: proof of covariance estimation}

We provide a separate argument for \cref{thm: sampleVSconditioning} in the setting of Covariance estimation I.

In this setting, we have that $a_i \sim N(0,I_n/m)$ are i.i.d. We equip the space $W= \R^m$ with the $\ell_1$ norm. Our goal is to show that \eqref{eq:abstract_problem} is sharp in terms of $\ell_1$ norm with parameters $(\frac{1}{2},\frac{1}{2}\sqrt{c_1 k},2)$ and Lipschitz continuous with Lipschitz constant $L= \sqrt{c_2k}$ w.h.p.\ once $m\gtrsim nk$. We will do so by comparing Covariance estimation I with Covariance estimation II.

First, we replace the linear constraint in Problem \eqref{eq:abstract_problem} by 
$\cB(X) = d$ where $\cB: \S^n \rightarrow \R^{\lfloor m /2 \rfloor}$, and
\begin{equation}\label{eq: combinedBd}
\cB^* (e_i) = \frac{1}{2} a_{2i-1}a_{2i-1}^\top - \frac{1}{2} 
a_{2i}a_{2i}^\top \quad d_i = \frac{1}{2}b_{2i-1}-
\frac{1}{2}b_{2i},\quad i = 1,\dots, \lfloor m/2 \rfloor.
\end{equation}
This is distributed as an instance of Covariance estimation II.
By Theorem \ref{thm: sampleVSconditioning} (Covariance estimation II), we know that once $m\gtrsim n k$, it holds that
\begin{equation}\label{eq: sharpCovII}
\tr(X) + \sqrt{c_1k} \norm{\mathcal{B}(X)-d}_1 + 2
\dist(X,\S_+^n) -\tr(X^\natural)\geq \frac{1}{2} \norm{X-X^\natural}_1.
\end{equation}
Note that for each $i = 1,\dots, \lfloor m/2 \rfloor$, by the construction \eqref{eq: combinedBd}, we have  
\begin{equation*}
| \ip{\cB^*(e_i),X}-d_i|\leq \frac{1}{2}|\ip{\cA^*(e_{2i-1}),X}-b_{2i-1}|+ \frac{1}{2} |\ip{\cA^*(e_{2i}),X}-b_{2i}|.
\end{equation*}
Combining this fact with \eqref{eq: sharpCovII}, we see that the function 
$\tr(X) + \frac{1}{2}\sqrt{c_1k} \norm{\mathcal{A}(X)-b}_1 + 
2\dist(X,\S_+^n)$ is $\frac{1}{2}$ sharp around $X^\natural$ as well. 

To prove the Lipschitz constant is bounded, we utilize \cite[Lemma 3.1]{candes2013phaselift}, which shows that w.h.p.
\begin{equation*}
    \norm{\cA(X)}\leq 1.1 \norm{X}_1.
\end{equation*}
Hence we see $\tr(X) + \frac{1}{2}\sqrt{c_1k} \norm{\mathcal{A}(X)-b}_1 + 
2\dist(X,\S_+^n)$ is $3+\sqrt{c_1 k}$ Lipschitz with respect to the $\ell_1$ norm. 
This completes the proof of \Cref{thm: sampleVSconditioning} (Covariance estimation I).

\section{Sharp problem formulations in the presence of noise}
\label{sec:stat}
In this section, we show that the sharpness of a problem \eqref{eq:abstract_problem} in the noiseless setting $b =\cA(x^\natural)$ provides (algorthmically useful) information even in the noisy setting, where $b$ is replaced by $\tilde b = \cA(x^\natural) + \delta$ with $\delta$ small or sparse.
We begin with the case where $\delta$ is small. 
\begin{proposition}
\label{prop:robust_noise}
Suppose \eqref{eq:abstract_problem} is $(\mu, r, \ell)$ sharp around $x^\natural$. Let $\delta\in W$ and set $\tilde b = b + \delta$.
\begin{itemize}
    \item If $\tilde x$ minimizes
    \begin{align*}
        \tilde F_{r,\ell}(x) \coloneqq f(x) + r\norm{\cA(x) - \tilde b} + \ell\dist(x,\cK),
    \end{align*}
    then $\norm{\tilde x - x^\natural} \leq \frac{2r}{\mu}\norm{\delta}$.
    \item If $\tilde x$ minimizes
    \begin{align*}
        \tilde F_{r,\ell}^{\textup{thresh}}(x) \coloneqq \max\left(\tilde F_{r,\ell}(x),\, F_{r,\ell}(x^\natural) + 3r\norm{\delta}\right),
    \end{align*}
    then $\norm{\tilde{x} - x^\natural}\leq \frac{4r}{\mu}\norm{\delta}$. Furthermore, $\tilde F_{r,\ell}^{\textup{thresh}}$ is $\tfrac{\mu}{2}$-sharp around its optimizers.
\end{itemize}
\end{proposition}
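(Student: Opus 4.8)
The plan is to use the sharpness of $F_{r,\ell}$ around $x^\natural$ (\cref{def:sharpness}) together with the observation that the two penalized objectives $\tilde F_{r,\ell}$ and $F_{r,\ell}$ differ only through the term $r\norm{\cA(x)-\tilde b}$ versus $r\norm{\cA(x)-b}$, and that these two terms differ by at most $r\norm{\delta}$ by the triangle inequality. Concretely, I would first record the uniform two-sided bound $\abs{\tilde F_{r,\ell}(x)-F_{r,\ell}(x)}\le r\norm{\delta}$ for all $x\in V$, which follows from $\abs{\norm{\cA(x)-\tilde b}-\norm{\cA(x)-b}}\le\norm{\tilde b - b}=\norm{\delta}$.

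For the first bullet, let $\tilde x$ minimize $\tilde F_{r,\ell}$. I would chain inequalities to bound $F_{r,\ell}(\tilde x)$ from above: using the perturbation bound, optimality of $\tilde x$ for $\tilde F_{r,\ell}$, and the perturbation bound again at $x^\natural$,
\begin{align*}
F_{r,\ell}(\tilde x)\le \tilde F_{r,\ell}(\tilde x)+r\norm{\delta}\le \tilde F_{r,\ell}(x^\natural)+r\norm{\delta}\le F_{r,\ell}(x^\natural)+2r\norm{\delta}.
\end{align*}
Since $F_{r,\ell}(x^\natural)=f(x^\natural)=\min_x F_{r,\ell}(x)$ by sharpness, this says $\tilde x$ is $2r\norm{\delta}$-suboptimal for $F_{r,\ell}$, and the $\mu$-sharpness error bound \eqref{eq:sharp_as_error_bound} immediately yields $\norm{\tilde x-x^\natural}\le \tfrac{2r}{\mu}\norm{\delta}$.

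For the second bullet, I would work with $\tilde F_{r,\ell}^{\textup{thresh}}(x)=\max(\tilde F_{r,\ell}(x),\,F_{r,\ell}(x^\natural)+3r\norm{\delta})$. The key structural point is that $\tilde F_{r,\ell}^{\textup{thresh}}$ inherits $\tfrac{\mu}{2}$-sharpness; I would establish this by showing a lower bound of the form $\tilde F_{r,\ell}^{\textup{thresh}}(x)-\min\tilde F_{r,\ell}^{\textup{thresh}}\ge \tfrac{\mu}{2}\dist(x,\cX)$ where $\cX$ is the optimizer set of the thresholded objective. The idea is to split into two regimes according to which term achieves the max: when $\tilde F_{r,\ell}(x)$ dominates, I use its closeness to the $\mu$-sharp $F_{r,\ell}$ (from the $r\norm{\delta}$ perturbation bound) to recover sharpness with a degraded constant $\tfrac{\mu}{2}$, where the constant loss and the choice of threshold $3r\norm{\delta}$ are calibrated so that points outside the flat region are penalized enough; when the constant term dominates, $x$ lies in a neighborhood of $x^\natural$ whose radius I control via the first-bullet estimate. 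The distance bound $\norm{\tilde x-x^\natural}\le\tfrac{4r}{\mu}\norm{\delta}$ then follows from the $\tfrac{\mu}{2}$-sharpness applied to a suboptimality gap of order $2r\norm{\delta}$.

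The main obstacle I anticipate is the second bullet: verifying that the flat truncation $\max(\cdot,\textup{const})$ preserves sharpness (with the halved modulus) and pinning down the optimizer set $\cX$ of the thresholded function, since the $\max$ creates a plateau on which the objective is constant. I would need to argue carefully that on the boundary between the two regimes the sharpness slope is inherited, and that the specific constant $3r\norm{\delta}$ in the threshold (rather than $2r\norm{\delta}$) provides the slack needed to absorb the $r\norm{\delta}$ perturbation while still leaving a $\tfrac{\mu}{2}$-sharp growth — this bookkeeping of constants is where the proof is most delicate, whereas the first bullet is a routine suboptimality-transfer argument.
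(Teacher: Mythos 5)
Your first bullet is correct and coincides with the paper's proof: the uniform perturbation bound $\abs{F_{r,\ell}(x)-\tilde F_{r,\ell}(x)}\leq r\norm{\delta}$, the optimality chain through $x^\natural$, and $\mu$-sharpness of $F_{r,\ell}$. In the second bullet there are two problems. The smaller one: your derivation of $\norm{\tilde x - x^\natural}\leq \frac{4r}{\mu}\norm{\delta}$ from the $\frac{\mu}{2}$-sharpness of $\tilde F_{r,\ell}^{\textup{thresh}}$ cannot work, because any minimizer $\tilde x$ of $\tilde F_{r,\ell}^{\textup{thresh}}$ lies in the plateau $\cX=\set{x:\,\tilde F_{r,\ell}(x)\leq F_{r,\ell}(x^\natural)+3r\norm{\delta}}$, which also contains $x^\natural$; sharpness of the thresholded function only controls $\dist(\cdot,\cX)$ and is vacuous on $\cX$ itself, so it gives no information about $\norm{\tilde x-x^\natural}$. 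The correct route is the same suboptimality transfer as in your first bullet: $F_{r,\ell}(\tilde x)\leq \tilde F_{r,\ell}(\tilde x)+r\norm{\delta}\leq F_{r,\ell}(x^\natural)+4r\norm{\delta}$, then apply $\mu$-sharpness of the \emph{original} $F_{r,\ell}$.

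The more serious gap is in the sharpness claim itself. Your regime split by ``which term achieves the max'' is vacuous on one side: when the constant achieves the max, $x\in\cX$, $\dist(x,\cX)=0$, and there is nothing to prove. All content lies in the regime $\bar x\notin\cX$, and there your proposed argument (swap $\tilde F_{r,\ell}$ for $F_{r,\ell}$ at cost $r\norm{\delta}$, then invoke $\mu$-sharpness of $F_{r,\ell}$) gives only
\begin{align*}
\tilde F_{r,\ell}^{\textup{thresh}}(\bar x)-\min_{x\in V} \tilde F_{r,\ell}^{\textup{thresh}}(x) \;\geq\; \mu\norm{\bar x-x^\natural}-4r\norm{\delta}.
\end{align*}
Since the only a priori comparison between $\dist(\bar x,\cX)$ and $\norm{\bar x-x^\natural}$ is $\dist(\bar x,\cX)\leq\norm{\bar x-x^\natural}$ (because $x^\natural\in\cX$), this certifies the needed bound $\frac{\mu}{2}\dist(\bar x,\cX)$ only when $\norm{\bar x-x^\natural}\geq\frac{8r}{\mu}\norm{\delta}$. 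But $\cX$ is contained in the ball of radius $\frac{4r}{\mu}\norm{\delta}$ around $x^\natural$ (by the distance bound above), so every point just outside the boundary of $\cX$ fails this requirement, and for such points your lower bound can even be negative while $\dist(\bar x,\cX)>0$. In other words, the argument breaks exactly where the statement has content: near $\partial\cX$. The missing ingredient is the paper's interpolation device: for $\bar x\notin\cX$, set $\tilde x=(1-\alpha)x^\natural+\alpha\bar x$ with $\alpha=\frac{2r\norm{\delta}}{\tilde F_{r,\ell}(\bar x)-F_{r,\ell}(x^\natural)-r\norm{\delta}}\in[0,1]$; convexity of $\tilde F_{r,\ell}$ together with $\tilde F_{r,\ell}(x^\natural)\leq F_{r,\ell}(x^\natural)+r\norm{\delta}$ shows $\tilde x\in\cX$, and then $\mu$-sharpness of $F_{r,\ell}$ applied to $\norm{\bar x-\tilde x}=(1-\alpha)\norm{\bar x-x^\natural}$, plus a short computation with this specific $\alpha$, yields the $\frac{\mu}{2}$ growth. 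Without this step (or an equivalent construction of a point of $\cX$ at controlled distance from $\bar x$), the claim that the truncation preserves sharpness remains unproven --- which is precisely the part you flagged as delicate but did not resolve.
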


\begin{remark}
Suppose $F_{r,\ell}$ is $L$-Lipschitz with 
$f(x) = \norm{x}$, $\cK = V$ and $L\geq 1$ as in sparse vector recovery or low-rank matrix sensing. Then we have 
\begin{align*}
L \norm{x^\natural} \geq \abs{F_{r,\ell}(0) - F_{r,\ell}(x^\natural)} = \abs{r\norm{b}-\norm{x^\natural}} \implies (L+1)\norm{x^\natural} \geq r\norm{b}.
\end{align*}	
Hence, combining this inequality with the first item of Proposition \ref{prop:robust_noise}, we have 
\begin{align*}
	\frac{\norm{\tilde{x} - x^\natural}}{\norm{x^\natural }} \leq \frac{2L+2}{\mu} \frac{\norm{\delta}}{\norm{b}}.
\end{align*}
Thus we see that indeed the condition number of \eqref{eq:abstract_problem} controls the 
relative change of the solution to the relative perturbation to the data vector $b$:
\begin{gather*}
		\underbrace{\text{Relative change in solution}}_{\frac{\norm{\tilde{x} - x^\natural}}{\norm{x^\natural }}}
		\leq \mathcal{O}(\kappa) \cdot  \underbrace{\text{Relative change in data vector}}_{\frac{\norm{\delta}}{\norm{b}}}.\qedhere
\end{gather*}
This is analgous to the use of condition number in linear equations. 
\end{remark}

\begin{remark}
    \label{rem:algorithmically_sparse_tildeF}
When $\delta$ is nonzero, the penalization formulation $\tilde{F}_{r,\ell}$ is not necessarily a sharp function. However, the 
above proposition asserts that $\tilde F_{r,\ell}^\textup{thresh}(x)$ is still sharp. Hence, we may hope to apply the methods described in \Cref{sec:algs}, which apply to sharp functions:
On the surface, evaluating the function $\tilde F_{r,\ell}^\textup{thresh}(x)$ (and its subgradients) requires access to both $x^\natural$ and $\norm{\delta}$. 
While we will not have access to these quantities in practice, 
that is of little consequence if we only plan to apply first-order methods (as we suggest in \Cref{sec:algs}). Indeed, any first-order method applied to $\tilde F_{r,\ell}$ will behave equivalently to the first-order method applied to $\tilde F_{r,\ell}^\textup{thresh}$ until an iterate $\tilde x$ satisfying $\norm{\tilde x - x^\natural} \leq \frac{4r}{\mu}\norm{\delta}$ is found. In particular, the algorithms presented in \Cref{sec:algs} applied to $\tilde F_{r,\ell}$ will converge linearly to such a point with a rate depending on $\frac{\mu}{2}$. We emphasize that such a procedure is \emph{adaptive} (to the noise level $\norm{\delta}$) in both $\norm{\tilde x - x^\natural}$ and the rate of convergence to $\tilde x$.
\end{remark}

\begin{proof}[Proof of \cref{prop:robust_noise}]
    For notational convenience, in this proof we will drop all subscripts $r,\ell$.

    By the triangle inequality, for all $x\in V$, we have
    \begin{align*}
        \abs{F(x) - \tilde F(x)} &= \abs{r\norm{\cA(x) - b} - r\norm{\cA(x) - \tilde b}}\leq r\norm{\delta}.
    \end{align*}
    
    For the first claim, note that by optimality of $\tilde x$ in $\tilde F$, we have that
    \begin{align*}
        F(\tilde x) &\leq \tilde F(\tilde x) + r\norm{\delta}\leq \tilde F(x^\natural) + r\norm{\delta}\leq F(x^\natural) + 2r\norm{\delta}.
    \end{align*}
    Combining this inequality with $\mu$-sharpness of $F$ around $x^\natural$ proves the first claim.

    Consider the second claim. Note that $\tilde F(x^\natural) \leq F(x^\natural) + r\norm{\delta}$ so that $\tilde F$ achieves values bounded above by the threshold value of $F(x^\natural) + 3r\norm{\delta}$. Thus, the set of minimizers of $\tilde F^{\textup{thresh}}(x)$ is given by $\cX \coloneqq \set{x\in V:\, \tilde F(x) \leq F(x^\natural) + 3r\norm{\delta}}$.
    Then, if $\tilde x$ minimizes $\tilde F^{\textup{thresh}}(x)$, we must have
    \begin{align*}
        F(\tilde x) &\leq \tilde F(\tilde x) + r\norm{\delta}\leq F(x^\natural) + 4r\norm{\delta}.
    \end{align*}
    Combining this inequality with $\mu$-sharpness of $F$ around $x^\natural$ shows that $\norm{\tilde x - x^\natural} \leq \frac{4r}{\mu}\norm{\delta}$.

    It remains to show that $\tilde F^{\textup{thresh}}(x)$ is $\mu/2$ sharp around its optimizers $\cX$. By the definition of sharpness (see \cref{def:sharp_fn}), the goal is to show that for any $\bar x\in V\setminus \cX$, there exists $\tilde x\in \cX$ satisfying
    \begin{align*}
        \frac{\mu}{2}\norm{\bar x - \tilde x} \leq \tilde F^\textup{thresh}(\bar x) - \tilde F^\textup{thresh}(\tilde x).
    \end{align*}
    Note that for any $\bar x\in V\setminus\cX$ and $\tilde x\in \cX$, we have $\tilde F^\textup{thresh}(\bar x) = \tilde F(\bar x)$ and $\tilde F^\textup{thresh}(\tilde x) = F(x^\natural)+3r\norm{\delta}$.
    
    Set $\tilde x = (1-\alpha)x^\natural + \alpha \bar x$ where
    \begin{align*}
       \alpha = \frac{2r\norm{\delta}}{\tilde F(\bar x) - F(x^\natural) - r\norm{\delta}}.
    \end{align*}
    As $\tilde F(\bar x) > F(x^\natural) + 3r\norm{\delta}$, we have that $\alpha$ is well-defined and $\alpha\in[0,1]$. By convexity of $\tilde F$, 
    \begin{align*}
        \tilde F(\tilde x) &\leq (1-\alpha)\tilde F(x^\natural) + \alpha \tilde F(\bar x)\\
        &\leq (1- \alpha)(F(x^\natural) + r \norm{\delta}) + \alpha \tilde F(\bar x)\\
        &= F(x^\natural) + r\norm{\delta} + \left(\frac{2r\norm{\delta}}{\tilde F(\bar x) - F(x^\natural) -r\norm{\delta}}\right)
        \left(\tilde F(\bar x) - F(x^\natural) - r\norm{\delta}\right)\\
        &= F(x^\natural) +3r\norm{\delta}.
    \end{align*}
    We deduce that $\tilde x\in\cX$.

    Finally, we have
    \begin{align*}
        \frac{\mu}{2}\norm{\bar x - \tilde x} &= (1-\alpha)\frac{\mu}{2}\norm{x^\natural - \bar x}\\
        &\leq \frac{1-\alpha}{2}\left(F(\bar x) - F(x^\natural)\right)\\
        &\leq \frac{1}{2}\left(\frac{\tilde F(\bar x) - F(x^\natural) -3r\norm{\delta}}{\tilde F(\bar x) - F(x^\natural) - r\norm{\delta}}\right)\left(\tilde F(\bar x)-F(x^\natural)+ r\norm{\delta}\right)\\
        &= \frac{1}{2}\left(\tilde F(\bar x) - F(x^\natural) -3r\norm{\delta}\right)\left(\frac{\tilde F(\bar x) - F(x^\natural) - 3r\norm{\delta} + 4r\norm{\delta}}{\tilde F(\bar x) - F(x^\natural) - 3r\norm{\delta} + 2r\norm{\delta}}\right)\\
        &\leq \tilde F(\bar x) - F(x^\natural) -3r\norm{\delta}.
    \end{align*}
    Here, the second line follows by $\mu$-sharpness of $F$, the third line follows by the definition of $\alpha$, and the final line follows from the premise that $\tilde F(\bar x) > F(x^\natural) + 3r\norm{\delta}$.
\end{proof}

The next proposition shows that if $W$ is equipped with the $\ell_1$ norm and \eqref{eq:abstract_problem} is sharp in the noiseless case, then exact recovery continues to be possible (with linearly convergent algorithms) in the presence of grossly-but-sparsely-corrupted observations.

\begin{proposition}
\label{prop:robust_sparse}
Suppose \eqref{eq:abstract_problem} with $b = \cA(x^\natural)$ is $(\mu,r,\ell)$ sharp around $x^\natural$. Let $\cU$ be a normed Euclidean space. Let $\cB:V\to\cU$ be a linear operator with $r\norm{\cB} < \mu$ and $\delta\in \cU$. 
Here, $\norm{\cB}$ is the norm of $\cB:V\to\cU$ induced by the norms on $V$ and $\cU$.
Then, the function
\begin{align}
    \label{eq:sparse_noise}
    F_{r,\ell}(x) + r\norm{\cB(x) - \cB(x^\natural) - \delta}
\end{align}
is $(\mu - r\norm{\cB})$-sharp around $x^\natural$.
\end{proposition}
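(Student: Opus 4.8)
The plan is to directly verify the sharpness inequality of \cref{def:sharp_fn} for the function
\[
    G(x) \coloneqq F_{r,\ell}(x) + r\norm{\cB(x) - \cB(x^\natural) - \delta}
\]
around $x^\natural$, establishing along the way that $x^\natural$ is the unique minimizer of $G$. First I would evaluate $G$ at $x^\natural$: the $\cB$-term collapses to $r\norm{\delta}$, and since $x^\natural$ is feasible we have $F_{r,\ell}(x^\natural) = f(x^\natural)$, so $G(x^\natural) = f(x^\natural) + r\norm{\delta}$. The difference $G(x) - G(x^\natural)$ then splits cleanly into the $F_{r,\ell}$-difference plus the penalty difference $r\bigl(\norm{\cB(x) - \cB(x^\natural) - \delta} - \norm{\delta}\bigr)$.

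The heart of the argument is to lower bound this new penalty difference. Writing $u = \cB(x) - \cB(x^\natural)$, the reverse triangle inequality gives
\[
    \norm{u - \delta} - \norm{\delta} \geq -\norm{u} \geq -\norm{\cB}\,\norm{x - x^\natural},
\]
where the final step uses the definition of the operator norm $\norm{\cB}$ together with linearity of $\cB$. In other words, the added penalty can decrease $G$ relative to its value at $x^\natural$ by at most $r\norm{\cB}\norm{x - x^\natural}$.

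Combining this with the assumed $\mu$-sharpness of $F_{r,\ell}$ around $x^\natural$, i.e. $F_{r,\ell}(x) - F_{r,\ell}(x^\natural) \geq \mu\norm{x - x^\natural}$, I obtain
\[
    G(x) - G(x^\natural) \geq \mu\norm{x - x^\natural} - r\norm{\cB}\norm{x - x^\natural} = (\mu - r\norm{\cB})\norm{x - x^\natural}.
\]
Because $r\norm{\cB} < \mu$ by hypothesis, the right-hand side is strictly positive whenever $x \neq x^\natural$, so $x^\natural$ is indeed the unique minimizer of $G$ and $G$ is $(\mu - r\norm{\cB})$-sharp around it, as required.

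There is essentially no serious obstacle here; the only points requiring care are that the shift $\cB(x^\natural)$ and perturbation $\delta$ are arranged so the penalty equals $r\norm{\delta}$ at $x^\natural$, and that the correct tool is the \emph{reverse} (not forward) triangle inequality, since we need a \emph{lower} bound on the penalty difference. I would also note that the argument uses nothing about $\cB$ or $\delta$ beyond linearity and the operator-norm bound, so it shows more generally that adjoining any $r$-scaled Lipschitz penalty of Lipschitz constant below $\mu/r$ preserves sharpness with the expected degradation $\mu \mapsto \mu - r\norm{\cB}$.
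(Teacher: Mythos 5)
Your proof is correct, but it takes a genuinely different route from the paper. You argue entirely on the primal side: evaluate the perturbed function at $x^\natural$, use the reverse triangle inequality $\norm{\cB(x)-\cB(x^\natural)-\delta} - \norm{\delta} \geq -\norm{\cB(x)-\cB(x^\natural)} \geq -\norm{\cB}\,\norm{x-x^\natural}$, and add this to the assumed sharpness inequality for $F_{r,\ell}$; since $\mu - r\norm{\cB} > 0$, this simultaneously certifies that $x^\natural$ is the unique minimizer and gives the claimed sharpness constant. The paper instead works dually: it picks a norming functional $\bar\delta\in\cU^*$ for $\delta$, notes that the subdifferential of \eqref{eq:sparse_noise} at $x^\natural$ contains the translate $\partial F_{r,\ell}(x^\natural) + r\cB^*(\bar\delta)$, and invokes \cref{lem:sharp_iff_ball} together with $\norm{\cB^*}=\norm{\cB}$ to conclude that this translate still contains the ball $B_{V^*}(0,\mu - r\norm{\cB})$. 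Your approach is more elementary and self-contained (no subdifferential calculus, no dual norms, no appeal to \cref{lem:sharp_iff_ball}), and, as you observe, it immediately extends to adjoining \emph{any} penalty $P$ that is $\lambda$-Lipschitz with $r\lambda < \mu$ and minimized-in-value at $x^\natural$ up to the constant shift, not just penalties of the composite form $r\norm{\cB(\cdot)-c}$. What the paper's dual argument buys is consistency with the machinery it uses elsewhere: the explicit ball containment $B_{V^*}(0,\mu-r\norm{\cB})\subseteq \partial\bigl(F_{r,\ell} + r\norm{\cB(\cdot)-\cB(x^\natural)-\delta}\bigr)(x^\natural)$ is exactly the form of certificate fed into the strict-complementarity results of \cref{sec:strict_comp}, and it isolates the geometric reason the perturbation is harmless, namely that it shifts the subdifferential at $x^\natural$ by a single vector of dual norm at most $r\norm{\cB}$.
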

\begin{proof}
Let $\tilde F(x) \coloneqq F_{r,\ell}(x) + r \norm{\cB(x) - \cB(x^\natural) - \delta}$.
Let $x\in V$. Then,
\begin{align*}
    \tilde F(x) - \tilde F(x^\natural) &= F_{r,\ell}(x) - F_{r,\ell}(x^\natural) + r\norm{\cB(x) - \cB(x^\natural) -\delta} - r \norm{\delta}\\
    &\geq \mu\norm{x - x^\natural} -r \norm{\cB(x) - \cB(x^\natural)}\\
    &\geq \left(\mu - r \norm{\cB}\right)\norm{x - x^\natural}.
\end{align*}
Here, the first inequality is the triangle inequality and the second inequality is the definition of the operator norm.
\end{proof}

\begin{example}
\label{rem:sparse_noise}
Consider the phase retrieval problem where an $\alpha$-fraction of the observations are corrupted arbitrarily. Formally, consider the following procedure:
Let $V = (\S^n,\norm{\cdot}_1)$ and $\tilde W = (\R^{\tilde m},\norm{\cdot}_1)$.
Let $m = \ceil{(1-\alpha)\tilde m}$.
Fix $X^\natural \in\S^n_+\subseteq V$ to be rank-one and let $\tilde \cA:V\to\tilde W$ be the random linear map
\begin{align*}
    \tilde\cA(X)_i = g_i^\intercal X g_i,\quad \text{where}\quad g_i\sim N(0,I/\tilde m).
\end{align*}
Let $\delta\in\tilde W$ denote an arbitrary vector, chosen possibly adversarially, with only the guarantee that $\supp(\delta)\subseteq[m+1,\tilde m]$. 
Set $\tilde b = \tilde\cA(X^\natural) + \delta$. Our goal is to recover $X^\natural$ from $\tilde\cA$ and $\tilde b$.

Let $W,\, U$ denote the decomposition of $\tilde W$ along the coordinates $[m]$ and $[m+1,\tilde m]$ with the $\ell_1$ norms.
Let $\cA: V\to W$ and $\cB:V\to U$ denote the corresponding restrictions of $\tilde \cA$.
Slightly abusing notation, we will let $\delta\in U$ denote the restriction of $\delta\in\tilde W$.

Now, suppose that
\begin{align*}
    \min_{X\in\S^n_+}\set{\tr(X):\, \begin{array}{l}
    \cA(X) = \cA(X^\natural)\\
    X\in\S^n_+
    \end{array}}
\end{align*}
is $(\mu,r,\ell)$ sharp around $X^\natural$. As $\tilde W$ carries the $\ell_1$ norm and $W,\,U$ are coordinate subspaces, we may write
\begin{align}
    \label{eq:exact_penalty_sparse_noise}
    &\tr(X) + r\norm{\tilde\cA(X) - \tilde b} + \ell \dist(X,\S^n_+)\\
    &\qquad = \tr(X) + r\norm{\cA(X) - \cA(X^\natural)} + r\norm{\cB(X) - \cB(x^\natural) - \delta} + \ell \dist(X,\S^n_+).\nonumber
\end{align}
\cref{prop:robust_sparse} states that if $\mu > r\norm{\cB}$, then $X^\natural$ is the unique minimizer of \eqref{eq:exact_penalty_sparse_noise} despite the corruption $\delta$.
By \cite[Corollary 5.35]{vershynin_2012}, we know that w.h.p., $\norm{\cB} \leq \left(\tfrac{\sqrt{\alpha\tilde m}+2\sqrt{n}}{\sqrt{\tilde m}}\right)^2\leq 2\alpha + 8(n/\tilde m)$.
Combining these bounds, we have that $X^\natural$ is the unique minimizer of the sharp unconstrained minimization problem \eqref{eq:exact_penalty_sparse_noise} if
$\alpha \lesssim \mu/r$ and $n/\tilde m\lesssim\mu/r$. In particular, $\mu/r$ controls the fraction of allowed gross corruption in the observation $\tilde \cA(X^\natural)$.
\end{example}

\section{First-order methods for non-Euclidean sharp minimization}
\label{sec:algs}

We next turn to algorithms for minimizing the sharp nonsmooth formulations proposed in this paper.
Our goal is to take advantage of the well-conditioning results (\cref{thm: sampleVSconditioning}) and to design a first-order method tailored to sharp Lipschitz functions in the $\ell_1$ norm.

Before we discuss our new algorithm, we first point out that sharp functions that are well-conditioned in $\ell_1$ are necessarily poorly conditioned in $\ell_2$.
\begin{proposition}
\label{prop:poor_ell_2}
Let $V$ be one of $\R^n$, $\S^n$ or $\R^{n\by N}$. In the latter case, assume that $n\leq N$.
Suppose $f:V\to\R$ is $\mu$-sharp and $L$-Lipschitz in the $\ell_1$ norm around $x^\natural \in V$.
If $f$ is $\alpha$-sharp and $\beta$-Lipschitz in the $\ell_2$ norm around $x^\natural \in V$, then $\alpha$ and $\beta$ must satisfy
\begin{align*}
    \alpha \leq L,\qquad\text{and}\qquad
    \beta \geq \mu\sqrt{n}.
\end{align*}
\end{proposition}
\begin{proof}
First, suppose $V=\R^n$.
Let $y = x^\natural + (\text{any 1-sparse vector})$.
Then,
\begin{align*}
    \alpha\norm{y - x^\natural}_2 \leq f(y)-f(x^\natural) \leq L\norm{y-x^\natural}_1.
\end{align*}
We deduce that $\alpha \leq L$.
Similarly, let $z = x^\natural + \mb 1_n$ (the all-ones vector).
Then,
\begin{align*}
    \beta \norm{z - x^\natural}_2 \geq f(z) - f(x^\natural) \geq \mu\norm{z - x^\natural}_1.
\end{align*}
We deduce that $\beta \geq \mu\sqrt{n}$.

The other two settings are proved analogously: take $y = x^\natural + (\text{any rank-1 matrix})$ and $z = x^\natural + I_n$ where $I_n$ is the $n\by n$ identity matrix padded to $n\by N$.
\end{proof}
This proposition tells us that the functions $F_{r,\ell}$ from \cref{thm: sampleVSconditioning} have constant sharpness in the $\ell_2$ norm and an $\Omega(\sqrt{n})$ Lipschitz constant in the $\ell_2$ norm. Thus, first-order methods for sharp Lipschitz functions in the $\ell_2$ norm~\cite{goffin1977convergence,yang2018rsg,polyak1969minimization} can at best guarantee a convergence rate of $O\left(n\log\left(\frac{1}{\epsilon}\right)\right)$.
This dependence on $n$ precludes the use of such algorithms (for example, subgradient descent with Polyak stepsizes~\cite{polyak1969minimization}) on large-scale instances of our signal recovery problems (see \cref{subfig:polyak_rgd}) and motivates our development of an algorithm with almost dimension-free iteration complexity guarantees in the next subsection.

\begin{figure}
    \centering
    \begin{subfigure}[t]{0.45\linewidth}
        \centering
        \includegraphics[width=\linewidth]{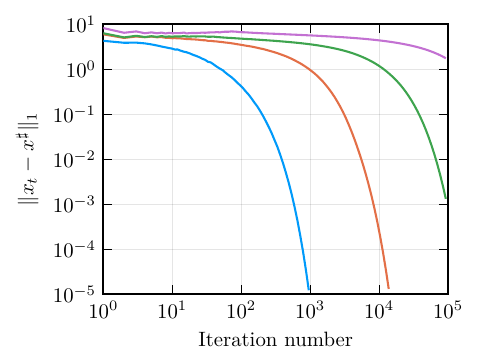}
        \caption{Subgrad.\ desc.\ with Polyak stepsizes}
        \label{subfig:polyak_rgd}
    \end{subfigure}\qquad
    \begin{subfigure}[t]{0.45\linewidth}
        \centering
        \includegraphics[width=\linewidth]{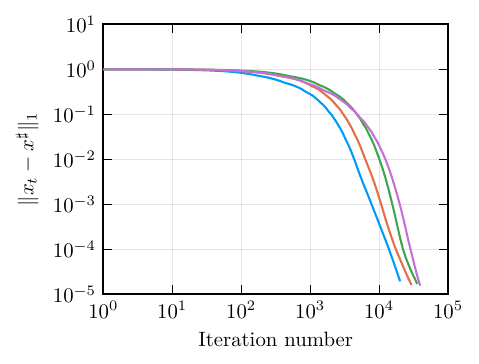}
        \caption{\texttt{Polyak-RMD}}
    \end{subfigure}
    \includegraphics[width=0.55\linewidth]{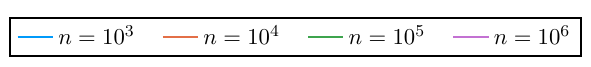}
    \caption{The convergence of subgradient descent with Polyak stepsizes~\cite{polyak1969minimization} and \texttt{Polyak-RMD} (described in Section \ref{subsec:rmd}) on sparse recovery for different values of $n$, the ambient dimension. The support size of $x^\sharp$ is $k = 5$ 
    and the sensing map $\cA$ is generated as described in Section \ref{sec: sensing_model_sample_complexity}. The number of observations is $m = 2T$ where $T = \left(2k\log(n/k) + 1.25 k + 1\right)$ is the current best estimate of the statistical threshold for sparse recovery~\cite{chandrasekaran2012convex}.  See \texttt{\url{https://github.com/alexlihengwang/sharpness_well_conditioning}} for implementation details. Note 
    subgradient descent with Polyak stepsizes deteriorates as the dimension $n$ increases while our \texttt{Polyak-RMD} is insensitive to the dimension in terms of iteration number.}

    \label{fig:numerical}
\end{figure}

\subsection{Restarted Mirror Descent}
\label{subsec:rmd}
We now describe the restarted mirror descent (RMD) algorithm, \cref{alg:rmd}.
This algorithm generalizes similar algorithms for minimizing sharp functions in a Euclidean norm~\cite{goffin1977convergence,yang2018rsg,polyak1969minimization} and has nearly dimension-independent linear convergence rates that depend explicitly on sharpness (see \cref{thm:sharp_rmd}) in an $\ell_p$ norm ($p\in[1,2]$). \cref{alg:rmd} can be applied to $F_{r,\ell}$, the sharp exact penalty formulation of \eqref{eq:abstract_problem}, or any of its sharp perturbations (see \cref{prop:robust_sparse,prop:robust_noise}).

We restrict our attention to sharp Lipschitz convex functions in an $\ell_p$ or Schatten $p$-norm for $p\in[1,2]$: Throughout this section, 
let $V$ be a normed Euclidean space.
We will overload notation so that we can simultaneously consider three separate settings:
\begin{assumption}
    \label{as:lp_setup}
Let $p\in[1,2]$ and either:
\begin{itemize}
    \item Let $V = (\R^n,\norm{\cdot}_p)$ where $\norm{\cdot}_p$ is the $\ell_p$ norm. Or,
    \item let $V = (\S^n,\norm{\cdot}_p)$ where $\norm{\cdot}_p$ is the Schatten $p$-norm. Or,
    \item let $V = (\R^{n\by N}, \norm{\cdot}_p)$ where $\norm{\cdot}_p$ is the Schatten $p$-norm.
\end{itemize}
If we are in the third case, we will assume that $n\leq N$.
\end{assumption}

Recall the mirror descent algorithm and its guarantee~\cite[Theorem 4.2]{bubeck2015convex}.

\begin{algorithm}
    \caption{Mirror Descent}
    Given $f:V\to\R$, $\bar x\in\R^d$, $\eta>0$, $T\in\N$, $h:V\to\R$
    \begin{itemize}
        \item Let $x_0 = \bar x$,\, $\theta_0 = 0\in\R^d$
        \item For $t = 1,\dots, T$
        \begin{itemize}
            \item Let $g_t\in \partial f(x_{t-1})$ and set $\theta_t = \theta_{t-1} - \eta \cdot g_t$
            \item Set $x_t = (\grad h)^{-1}(\theta_t)$
        \end{itemize}
        \item Output the $x_t$ minimizing $f(x_t)$ among $t\in[0,T]$.
    \end{itemize}
\end{algorithm}

\begin{lemma}
    \label{lem:mirr_desc_guarantee}
    Let $h:V\to\R$ be differentiable and $\sigma$-strongly convex with respect to the norm on $V$. Suppose $f:V\to\R$ is convex and $L$-Lipschitz with respect to the norm on $V$. Then, the mirror descent algorithm initialized at $\bar x$, run for $t$ iterations, with step-size $\eta$, produces $\tilde x$ such that 
\begin{align*}
    f(\tilde x) - f(x^*) \leq \frac{L^2\eta}{2\sigma} + \frac{D_h(x^* || \bar x)}{\eta t}
\end{align*}
where $x^*\in V$ is any minimizer of $f$. Here, each iteration requires computing a single subgradient of $f$ and applying $(\grad h)^{-1}$ and arithmetic operations in $V$ and $V^*$.
\end{lemma}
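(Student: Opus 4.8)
The plan is to run the standard potential-function analysis for mirror descent, tracking the Bregman divergence $D_h(x^* \| x_t)$ toward a fixed minimizer $x^*$ of $f$. The two ingredients driving the argument are already built into the setup: the mirror descent update satisfies, by construction, $\grad h(x_t) = \grad h(x_{t-1}) - \eta g_t$ with $g_t\in\partial f(x_{t-1})$, and convexity of $f$ yields the per-step inequality $f(x_{t-1}) - f(x^*) \leq \ip{g_t, x_{t-1} - x^*}$. So it suffices to bound $\eta\ip{g_t, x_{t-1} - x^*}$ and telescope.

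First I would record the three-point identity for the Bregman divergence, $\ip{\grad h(b) - \grad h(a), c - a} = D_h(c\|a) - D_h(c\|b) + D_h(a\|b)$ for all $a,b,c\in V$, which follows immediately from expanding $D_h(x\|y) = h(x) - h(y) - \ip{\grad h(y), x - y}$. I would then split $\eta\ip{g_t, x_{t-1} - x^*} = \eta\ip{g_t, x_{t-1} - x_t} + \eta\ip{g_t, x_t - x^*}$. Applying the identity with $a = x_t$, $b = x_{t-1}$, $c = x^*$ and substituting $\eta g_t = \grad h(x_{t-1}) - \grad h(x_t)$ rewrites the second term exactly as $D_h(x^*\|x_{t-1}) - D_h(x^*\|x_t) - D_h(x_t\|x_{t-1})$.

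Next I would control the two leftover pieces. For the inner-product term I would use the generalized Cauchy--Schwarz inequality together with the fact that any subgradient of an $L$-Lipschitz function has dual norm at most $L$, giving $\eta\ip{g_t, x_{t-1} - x_t} \leq \eta L \norm{x_{t-1} - x_t}$; for the divergence term I would invoke $\sigma$-strong convexity of $h$, which gives $D_h(x_t\|x_{t-1}) \geq \frac{\sigma}{2}\norm{x_t - x_{t-1}}^2$. Completing the square, i.e.\ maximizing $\eta L u - \frac{\sigma}{2}u^2$ over $u = \norm{x_{t-1}-x_t}\geq 0$, produces the clean one-step bound $\eta\bigl(f(x_{t-1}) - f(x^*)\bigr) \leq \frac{\eta^2 L^2}{2\sigma} + D_h(x^*\|x_{t-1}) - D_h(x^*\|x_t)$. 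Summing over $t = 1,\dots,T$ telescopes the divergence terms; discarding $D_h(x^*\|x_T)\geq 0$ and using $x_0 = \bar x$ leaves $\eta\sum_{t=1}^T\bigl(f(x_{t-1}) - f(x^*)\bigr) \leq \frac{\eta^2 L^2 T}{2\sigma} + D_h(x^*\|\bar x)$. Since the algorithm outputs the iterate $\tilde x$ of smallest objective value, $f(\tilde x)\leq f(x_{t-1})$ for every $t$, so dividing by $\eta T$ yields the claimed bound.

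The main thing to be careful about is the interplay between the primal norm (in which $h$ is strongly convex) and its dual (in which the subgradient is bounded): these are precisely the two norms married in the completing-the-square step, and getting the constant $\frac{L^2\eta}{2\sigma}$ right depends on matching them correctly. Everything else is bookkeeping. One minor point worth flagging for correctness is that the analysis uses $\grad h(x_0) = \theta_0$, so the initialization should be read as $\bar x = (\grad h)^{-1}(\theta_0)$ (e.g.\ $\bar x = \argmin h$ when $\theta_0 = 0$), or equivalently one initializes $\theta_0 = \grad h(\bar x)$.
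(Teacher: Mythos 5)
Your proof is correct and is essentially the standard argument behind the result the paper merely cites (\cite[Theorem 4.2]{bubeck2015convex}): per-step use of convexity, the three-point identity for $D_h$, the dual-norm bound $\norm{g_t}\leq L$ on subgradients of an $L$-Lipschitz function, $\sigma$-strong convexity of $h$, completing the square, and telescoping. Your closing caveat about the initialization is also well taken and is consistent with how the paper invokes the lemma: the mirror maps $h_{\bar x}(x)=\tfrac{1}{2}\norm{x-\bar x}_p^2$ used in \texttt{RMD} satisfy $\grad h_{\bar x}(\bar x)=0$, so the algorithm's choice $\theta_0=0$ indeed equals $\grad h(x_0)$ there.
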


In the bound above, the quantity $D_h(\cdot||\cdot)$ is a Bregman divergence term. We elaborate on this term for our choice of the map $h$.
Let $p\in(1,2]$ and define $h_{\bar x}(x)\coloneqq \frac{1}{2}\norm{x - \bar x}_p^2$ for $\bar x\in V$. 
It is known~\cite[Theorem 1]{ball2002sharp} that in each of the three setups in \cref{as:lp_setup}, that $h_{\bar x}$ is differentiable and $(p-1)$-strongly convex w.r.t.\ $\norm{\cdot}_p$.
Furthermore, for all $x^*\in V$, the Bregman divergence (associated to $h_{\bar x}$) of $x^*$ with respect to $\bar x$ is
\begin{align*}
    D_{h_{\bar x}}(x^* || \bar x) \coloneqq h_{\bar x}(x^*) - h_{\bar x}(\bar x) - \ip{\grad h_{\bar x}(\bar x), x^* - \bar x} = h_{\bar x}(x^*) = \frac{1}{2}\norm{x^* - \bar x}_p^2.
\end{align*}
Thus, if $f$ is $L$-Lipschitz w.r.t.\ $\norm{\cdot}_p$, then the output $\tilde x= \texttt{mirror}(h_{\bar x}, f, L, \bar x, t, \eta)$ has suboptimality bounded by
\begin{align*}
    f(\tilde x) - f(x^*) \leq \frac{L^2\eta}{2(p-1)}+\frac{\norm{x^* - \bar x}_p^2}{2\eta T}.
\end{align*}
This bound holds simultaneously for all minimizers $x^*\in V$ of $f$.

\begin{remark}
Consider a setup from \cref{as:lp_setup} and let $p\in(1,2]$. In each iteration of mirror descent (applied with $h_{\bar x}$), we must evaluate $(\grad h_{\bar x})^{-1}$ on some input $\theta\in V^*$. By \cite[Corollary 23.5.1]{rockafellar1997convex}, this is equivalent to evaluating $\grad h^*_{\bar x}(\theta)$ where $h^*_{\bar x}$ is the convex conjugate of $h_{\bar x}$. Thus,
\begin{align*}
    (\grad h_{\bar x})^{-1}(\theta) = \grad h^*_{\bar x}(\theta) = \bar x + \grad \frac{1}{2}\norm{\theta}_q^2 = \bar x + \frac{\sign(\theta)\circ \abs{\theta}^{q-1}}{\norm{\theta}_q^{q-2}}.
\end{align*}
Here, $q$ is the H\"older dual to $p$ and the expression $\sign(\theta)\circ \abs{\theta}^{q-1}$ is applied entrywise to the entries of $\theta$ if $\theta$ is a vector and to the singular values of $\theta$ if $\theta$ is a matrix.
\end{remark}
\begin{remark}
In the matrix setting, computing the mirror map requires performing an SVD in each iteration. We expect that this full SVD may be replaced by a partial SVD near an optimal low-rank solution (following~\cite{garber2022efficient}) and leave this extension for future work.
\end{remark}

Now consider the following restarted variant of mirror descent where the step size \emph{and} mirror map $h$ update at each restart.
\begin{algorithm}
    \caption{$\texttt{RMD}(f, L, x_0, K, \set{\eta_k}, t,p)$}
    \label{alg:rmd}
    \begin{itemize}
        \item For $k = 1,2,\dots,K$
        \begin{itemize}
            \item Set $x_k \gets \texttt{mirror}(h_{x_{k-1}}, f, L, x_{k-1}, t, \eta_k)$ where
            \begin{align*}
                h_{x_{k-1}}(x)\coloneqq \frac{1}{2}\norm{x - x_{k-1}}_p^2
            \end{align*}
        \end{itemize}
        \item Output $x_K$
    \end{itemize}
    \end{algorithm}
    
    \begin{theorem}
    \label{thm:sharp_rmd}
    Consider a setup from \cref{as:lp_setup} and let $p\in(1,2]$. Suppose $f:V\to\R$ is $L$-Lipschitz and $\mu$-sharp w.r.t.\ $\norm{\cdot}_p$ and $x_0\in V$ satisfies $f(x_0)- f^* \leq \epsilon_0$.
    Let $\epsilon_k = \epsilon_0 e^{-k/2}$, $K = \ceil{2\ln\left(\frac{\epsilon_0}{\epsilon}\right)}$, $t = \ceil{\frac{e L^2}{\mu^2 (p-1)}}$, and $\eta_k = \frac{(p-1)\epsilon_k}{L^2}$.
    Then, each iterate $x_k$ in $\texttt{RMD}(f,L, x_0, K, \set{\eta_k}, t, p)$ satisfies
    $f(x_k) - f^* \leq \epsilon_k$.
    In particular, an $0<\epsilon\leq \epsilon_0$-optimizer can be computed in
    \begin{align*}
        O\left(\frac{L^2}{\mu^2(p-1)}\log\left(\frac{\epsilon_0}{\epsilon}\right)\right)
    \end{align*}
    total mirror descent steps.
    If $f:V\to\R$ is $L$-Lipschitz and $\mu$-sharp w.r.t.\ $\norm{\cdot}_1$, we may apply the above statement with $p= 1+\frac{1}{\ln n}$, sharpness $\mu$, and Lipschitz constant $eL$ w.r.t.\ $\norm{\cdot}_p$.
    \end{theorem}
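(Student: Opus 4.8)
The plan is to establish the per-restart guarantee $f(x_k) - f^* \leq \epsilon_k$ by induction on the restart index $k$, then read off the total iteration count, and finally reduce the $\ell_1$ statement to the $p\in(1,2]$ case via norm equivalence. The base case $k=0$ is precisely the hypothesis $f(x_0) - f^* \leq \epsilon_0$. For the inductive step I would invoke \cref{lem:mirr_desc_guarantee} for the single run of mirror descent issued from $x_{k-1}$ with map $h_{x_{k-1}}(x) = \tfrac12\norm{x - x_{k-1}}_p^2$, whose $(p-1)$-strong convexity and Bregman identity $D_{h_{x_{k-1}}}(x^*\|x_{k-1}) = \tfrac12\norm{x^* - x_{k-1}}_p^2$ were already recorded above. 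This yields, for every minimizer $x^*$ of $f$,
\[
    f(x_k) - f^* \leq \frac{L^2\eta_k}{2(p-1)} + \frac{\norm{x^* - x_{k-1}}_p^2}{2\eta_k t}.
\]

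The heart of the argument is to bound each of the two terms by $\epsilon_k/2$. The first term equals exactly $\epsilon_k/2$ by the choice $\eta_k = (p-1)\epsilon_k/L^2$. For the second term I would combine $\mu$-sharpness with the inductive hypothesis: writing $\cX = \argmin f$, sharpness gives $\mu\,\dist(x_{k-1},\cX) \leq f(x_{k-1}) - f^* \leq \epsilon_{k-1}$, so some minimizer $x^*$ satisfies $\norm{x^* - x_{k-1}}_p \leq \epsilon_{k-1}/\mu$; since the mirror descent bound holds simultaneously for all minimizers, I may instantiate it at this $x^*$. Substituting and using the schedule identity $\epsilon_{k-1}^2 = e\,\epsilon_k^2$ (immediate from $\epsilon_k = \epsilon_0 e^{-k/2}$), the second term becomes $\tfrac{eL^2}{2\mu^2(p-1)t}\,\epsilon_k$, which is at most $\epsilon_k/2$ exactly because $t \geq eL^2/(\mu^2(p-1))$. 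Adding the two halves closes the induction. The complexity claim then follows by counting: each of the $K = \lceil 2\ln(\epsilon_0/\epsilon)\rceil$ restarts runs $t = \lceil eL^2/(\mu^2(p-1))\rceil$ steps, giving $Kt = O\!\left(\tfrac{L^2}{\mu^2(p-1)}\log(\epsilon_0/\epsilon)\right)$ total steps, while $f(x_K) - f^* \leq \epsilon_K = \epsilon_0 e^{-K/2} \leq \epsilon$ by the choice of $K$.

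The main obstacle is the final $\ell_1$ sentence, because the mirror map degenerates as $p\to 1$ (the strong convexity constant $p-1$ tends to $0$), so the $\ell_1$ case cannot be run directly. The remedy is to run the algorithm in $\ell_p$ with $p = 1 + 1/\ln n$ and transfer the constants through the norm equivalence $\norm{v}_p \leq \norm{v}_1 \leq n^{1-1/p}\norm{v}_p$, applied entrywise to vectors or to the (at most $n$, since $n\leq N$) singular values in the matrix settings of \cref{as:lp_setup}. The key estimate is $n^{1-1/p} = n^{1/(\ln n + 1)} = \exp\!\left(\ln n/(\ln n + 1)\right) < e$. Consequently $\ell_1$-sharpness with parameter $\mu$ upgrades to $\ell_p$-sharpness with the same $\mu$ (because $\dist_p \leq \dist_1$ makes the sharpness inequality only easier), whereas $L$-Lipschitzness in $\ell_1$ becomes $eL$-Lipschitzness in $\ell_p$. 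Feeding $p - 1 = 1/\ln n$ and Lipschitz constant $eL$ into the complexity bound just proved turns it into $O\!\left(\kappa^2 \log(n)\log(1/\epsilon)\right)$ with $\kappa = L/\mu$, as advertised in the introduction; the only care required is to confirm that the sharpness and Lipschitz transfers hold on the \emph{same} minimizer set, which is automatic since $\argmin f$ is norm-independent.
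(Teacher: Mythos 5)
Your proposal is correct and follows essentially the same route as the paper's proof: induction on the restart index using \cref{lem:mirr_desc_guarantee}, sharpness to bound $\norm{x^*-x_{k-1}}_p$ by $\epsilon_{k-1}/\mu$, the identity $\epsilon_{k-1}^2 = e\,\epsilon_k^2$ together with the choices of $\eta_k$ and $t$ to make each of the two error terms at most $\epsilon_k/2$, and the reduction of the $\ell_1$ case to $p = 1+1/\ln n$ via $\tfrac{1}{e}\norm{\cdot}_1 \leq \norm{\cdot}_p \leq \norm{\cdot}_1$ (which you derive elementarily from $n^{1-1/p} < e$, whereas the paper cites the same bounds from the literature). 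The only cosmetic difference is that the paper writes the inductive step as a single chain of inequalities summing to $\epsilon_k$ rather than splitting explicitly into two halves.
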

    \begin{proof}
    The claim holds for $k = 0$.    
    Now let $k\geq 1$.
    By $\mu$-sharpness of $f$, there exists an optimizer $x^*$ of $f$, satisfying $\mu\norm{x_{k-1} - x^*}_p \leq f(x_{k-1}) - f^*$.
    By \cref{lem:mirr_desc_guarantee}, we have
    \begin{align*}
        f(x_k) - f^* &\leq \frac{L^2 \eta_k}{2(p-1)} + \frac{\norm{x^* - x_{k-1}}_p^2}{2\eta_k t}\\
        &\leq \frac{L^2 \eta_k}{2(p-1)} + \frac{e\epsilon_k^2}{2\mu^2\eta_k t}\\
        &\leq \frac{L^2 \eta_k}{2(p-1)} + \frac{\epsilon_k^2 (p-1)}{2\eta_k L^2}\\
        &= \epsilon_k.
    \end{align*}

    The setting of $L$-Lipschitz, $\mu$-sharp convex functions w.r.t.\ $\norm{\cdot}_1$ reduces to the setting of $p = 1+ \frac{1}{\ln(n)}$ by the bounds
    \begin{align*}
        \frac{1}{e}\norm{w}_1 &\leq \norm{w}_p \leq \norm{w}_1,\qquad\forall w\in V,
    \end{align*}
    which hold~\cite{ball2002sharp} in each of the setups in \cref{as:lp_setup}. Specifically, these inequalities imply that $f$ is $\mu$-sharp and $eL$-Lipschitz w.r.t.\ $\norm{\cdot}_p$.
\end{proof}

\begin{remark}
\label{rem:roulet_comparison}
In \cite{roulet2017sharpness}, the authors consider restarted versions of various accelerated first order methods for sharp problems including problems in non-Euclidean spaces.
In the non-Euclidean setting, \cite{roulet2017sharpness} suggests algorithms for functions $f$ that satisfy the following proxy for sharpness:
    \begin{align}
        \label{eq:bregman_sharp}
        f(x) - f(x^\natural) &\geq \mu \sqrt{D_h(x^\natural|| x)}.
    \end{align}
    Here, $h:V\to\R$ is a \emph{fixed} differentiable and $\sigma$-strongly convex function\footnote{The notation of \cite{roulet2017sharpness} uses a $1$-strongly convex function, but this only changes the value of $\mu$ by a factor of $\sqrt{\sigma}$.} and $D_h$ is the Bregman divergence induced by $h$.
    Then, calculations almost identical to the proof of \cref{thm:sharp_rmd} show that restarted mirror descent with the mirror map $h$ in every restart produces an $\epsilon$-suboptimal solution in $O\left(\frac{L^2}{\mu^2\sigma}\log\left(\frac{\epsilon_0}{\epsilon}\right)\right)$ mirror descent steps.

    Unfortunately, \eqref{eq:bregman_sharp} is \emph{not} implied by sharpness in the original norm and may be overly restrictive, as illustrated in the example below. Specifically, in standard prox setups for mirror descent in $\ell_p$ norms or Schatten-$p$ norms, \eqref{eq:bregman_sharp} cannot hold for any $\mu>0$ unless $x^\natural$ has full support in the vector case or full rank in the matrix case.
\end{remark}

    \begin{example}
    Let $V=(\R^n,\norm{\cdot}_p)$ where $p\in(1,2)$ and $n\geq 2$.
    Let $f(x) \coloneqq \norm{x - x^\natural}_p$ where $x^\natural = e_1$. Clearly, $f$ is $1$-sharp around $x^\natural$ and $1$-Lipschitz w.r.t.\ $\norm{\cdot}_p$. 

    Two standard prox setups for mirror descent~\cite{nesterov2013first} in this setting are to take $h(x) = \frac{1}{2}\norm{x}_p^2$ (in the unbounded case) or
    $h(x) = \frac{1}{p}\norm{x}_p^p$ (in the bounded case); see \cite[Theorem 2.1]{nesterov2013first} for the corresponding strong convexity parameters.
    Consider the first setting, i.e., $h(x) = \frac{1}{2}\norm{x}_p^2$.    
    Letting $x_\epsilon = x^\natural + \epsilon e_2$, we have by Bernoulli's inequality that
    \begin{align*}
        \sqrt{D_h\left(x^\natural || x\right)} &= \sqrt{\frac{1}{2}(1+\abs{\epsilon}^p)^{2/p} - \frac{1}{2} - \ip{e_1, \epsilon e_2}}\\
        &=2^{-1/2} \sqrt{(1+\abs{\epsilon}^p)^{2/p} - 1}\\
        &\geq p^{-1/2} \abs{\epsilon}^{p/2}.
    \end{align*}
    On the other hand,
        $f(x_\epsilon) - f(x^\natural) \leq \abs{\epsilon}$.
    Thus, letting $\abs{\epsilon}\to 0$, we see that \eqref{eq:bregman_sharp} cannot hold for \emph{any} $\mu>0$.

    Next, suppose $h(x) = \frac{1}{p}\norm{x}^p_p$. Then,
    \begin{align*}
        \sqrt{D_h(x^\natural||x)} &= \sqrt{\frac{1}{p}(1 + \abs{\epsilon}^p) - \frac{1}{p} - \ip{e_1,\epsilon e_2}}= \abs{\epsilon}^{p/2}/\sqrt{p}.
    \end{align*}
    Again, comparing this bound to the fact that $f(x_\epsilon) - f(x^\natural) \leq \abs{\epsilon}$, we deduce that \eqref{eq:bregman_sharp} cannot hold for \emph{any} $\mu>0$.
\end{example}

The following variant of~\cref{alg:rmd}
replaces knowledge of $\mu$ with knowledge of $f^*$.
This ``Polyak'' variant achieves the same convergence rate as \Cref{alg:rmd} with the optimal choice of $\mu$.

\begin{algorithm}
    \caption{$\texttt{Polyak-RMD}(f, L, x_0, K, \set{\eta_k}, f^*,\set{\epsilon_k}, p)$}
    \label{alg:polyak-rmd}
    \begin{itemize}
        \item For $k = 1,2,\dots,K$
        \begin{itemize}
            \item Run mirror descent         
            $\texttt{mirror}(h_{x_{k-1}}, f, L, x_{k-1}, \infty, \eta_k)$ 
            until it finds an iterate $x_k$ satisfying
            \begin{align*}
                f(x_k) - f^* \leq \epsilon_k
            \end{align*}
        \end{itemize}
        \item Output $x_K$
    \end{itemize}
    \end{algorithm}

The convergence guarantee for Polyak-RMD (\cref{alg:polyak-rmd}) and its proof are entirely identical to that of \cref{alg:rmd}.

\begin{proposition}
    Consider a setup from \cref{as:lp_setup} and let $p\in(1,2]$. Suppose $f:V\to\R$ is $L$-Lipschitz and $\mu$-sharp w.r.t.\ $\norm{\cdot}_p$ and $x_0\in V$ satisfies $f(x_0)- f^* \leq \epsilon_0$.
    Let $\epsilon_k = \epsilon_0 e^{-k/2}$, $K = \ceil{2\ln\left(\frac{\epsilon_0}{\epsilon}\right)}$, and $\eta_k = \frac{(p-1)\epsilon_k}{L^2}$.
    Then, each iterate $x_k$ in $\texttt{Polyak-RMD}(f,L, x_0, K, \set{\eta_k}, f^*, \set{\epsilon_k}, p)$ is computed in at most
        $\ceil{\frac{eL^2}{\mu^2(p-1)}}$
    mirror descent steps.
    In particular, an $0<\epsilon\leq \epsilon_0$-minimizer can be computed in
    \begin{align*}
        O\left(\frac{L^2}{\mu^2(p-1)}\log\left(\frac{\epsilon_0}{\epsilon}\right)\right)
    \end{align*}
    total mirror descent steps.
    If $f:V\to\R$ is $L$-Lipschitz and $\mu$-sharp w.r.t.\ $\norm{\cdot}_1$, we may apply the above statement with $p= 1+\frac{1}{\ln n}$, sharpness $\mu$, and Lipschitz constant $eL$ w.r.t.\ $\norm{\cdot}_p$.
\end{proposition}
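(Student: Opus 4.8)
The plan is to mirror the proof of \cref{thm:sharp_rmd} almost verbatim; the only structural difference is that \cref{alg:polyak-rmd} terminates each inner mirror-descent run \emph{adaptively}, using the known optimal value $f^*$ to test the stopping condition $f(x_k) - f^* \le \epsilon_k$, rather than after a fixed number $t$ of steps. Consequently the entire content of the proof reduces to showing that this stopping test is \emph{triggered} within $t \coloneqq \ceil{eL^2/(\mu^2(p-1))}$ mirror-descent steps, which is exactly the fixed step budget used in the non-adaptive theorem.

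First I would argue by induction on the restart index $k$, the inductive claim being that the outer iterate satisfies $f(x_{k-1}) - f^* \le \epsilon_{k-1}$ at the start of restart $k$. The base case is the hypothesis $f(x_0) - f^* \le \epsilon_0$. For the inductive step, $\mu$-sharpness furnishes a minimizer $x^*$ with $\mu\norm{x_{k-1} - x^*}_p \le f(x_{k-1}) - f^* \le \epsilon_{k-1}$, and since $\epsilon_{k-1} = \sqrt{e}\,\epsilon_k$ this gives $\norm{x_{k-1} - x^*}_p^2 \le e\epsilon_k^2/\mu^2$. Feeding this bound, together with the choice $\eta_k = (p-1)\epsilon_k/L^2$, into the mirror-descent guarantee of \cref{lem:mirr_desc_guarantee} (using the Bregman simplification $D_{h_{x_{k-1}}}(x^*\,||\,x_{k-1}) = \tfrac12\norm{x^* - x_{k-1}}_p^2$), I obtain after $t$ steps the suboptimality bound $\frac{L^2\eta_k}{2(p-1)} + \frac{e\epsilon_k^2}{2\mu^2\eta_k t} \le \frac{\epsilon_k}{2} + \frac{\epsilon_k}{2} = \epsilon_k$, by the identical chain of inequalities appearing in \cref{thm:sharp_rmd}.

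The one place where the \emph{Polyak} nature matters is the observation that \cref{lem:mirr_desc_guarantee} bounds the \emph{best} iterate encountered over the horizon $[0,t]$. Thus the computation above certifies that \emph{some} iterate within the first $t$ steps already meets $f(x_k) - f^* \le \epsilon_k$; since \cref{alg:polyak-rmd} halts at the first iterate passing this test, restart $k$ consumes at most $t$ steps and produces an $x_k$ obeying the inductive claim. Summing over the $K = \ceil{2\ln(\epsilon_0/\epsilon)}$ restarts then yields the advertised total of $O\!\left(\frac{L^2}{\mu^2(p-1)}\log(\epsilon_0/\epsilon)\right)$ mirror-descent steps and an $\epsilon$-optimal output. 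Finally, the $\norm{\cdot}_1$ statement follows by the same reduction as in \cref{thm:sharp_rmd}: the norm equivalence $\tfrac1e\norm{w}_1 \le \norm{w}_p \le \norm{w}_1$ from \cite{ball2002sharp} at $p = 1 + 1/\ln n$ turns an $L$-Lipschitz, $\mu$-sharp function in $\norm{\cdot}_1$ into an $eL$-Lipschitz, $\mu$-sharp function in $\norm{\cdot}_p$, so the $p$-norm result applies directly.

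I expect no genuine analytic obstacle here, since the proposition is stated to have a proof ``entirely identical'' to \cref{thm:sharp_rmd}. The only subtlety worth flagging is the bookkeeping point that the adaptive stopping rule can never force more than the fixed step budget $t$; this is immediate once one invokes the best-iterate form of the mirror-descent guarantee, but it is the sole place where the argument is not a literal copy of the non-adaptive analysis.
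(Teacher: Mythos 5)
Your proposal is correct and follows exactly the route the paper intends: the paper itself gives no separate argument, stating only that the proof is ``entirely identical'' to that of \cref{thm:sharp_rmd}, and your write-up is precisely that proof with the one necessary added observation, namely that the best-iterate form of \cref{lem:mirr_desc_guarantee} guarantees the adaptive stopping test $f(x_k)-f^*\leq\epsilon_k$ fires within the fixed budget $\ceil{eL^2/(\mu^2(p-1))}$ steps. The induction, the use of sharpness with $\epsilon_{k-1}=\sqrt{e}\,\epsilon_k$, the choice of $\eta_k$, and the $\norm{\cdot}_1$ reduction via $p=1+1/\ln n$ all match the paper's argument.
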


\begin{remark}
    \label{rem:}
    The basic algorithmic structure of \cref{alg:rmd,alg:polyak-rmd} can also be extended to an adaptive variant. Specifically, it is possible to design an adaptive version of \cref{alg:rmd} (similar in spirit to~\cite{renegar2022simple}) that does not require knowledge of either $f^*$ or $\mu$ but would have a convergence rate of $O\left(\frac{L^2}{\mu^2(p-1)}\left(\log\left(\frac{\epsilon_0}{\epsilon}\right)\right)^2\right)$.
\end{remark}

\section{Discussion}\label{sec: discussion}

This paper shows that for various statistical signal recovery tasks, once the 
sample size is greater than a constant multiple of the recovery threshold, 
then the convex optimization problem becomes well-conditioned in the sense of sharpness w.r.t.\ the $\ell_1$ or Schatten-$1$ norm. In turn, this fact shows 
the optimization problem is robust to measurement error and optimization error. Furthermore, the newly developed algorithm \texttt{RMD} is able to achieve nearly-dimension-independent convergence rates. 

We hope this paper induces interest in the interplay between statistics and convex optimization, especially in nonsmooth formulations and algorithms for these nonsmooth formulations. In particular, the 
following two directions might be of interest for future investigations:
\begin{itemize}
	\item \textbf{Well conditioning beyond RIP}: This paper considers recovery tasks where strong RIP bounds have been established. Can we prove well-conditioning results for other statistical problems such as matrix completion \cite{candes2010power} and phase retrieval with coded diffraction patterns \cite{candes2015phase} where standard RIP bounds may not hold? \item \textbf{Adaptive penalty parameters}: Given a concrete statistical model, we can give upper bounds on $r$ and $\ell$ so that 
	the corresponding function $F_{r,\ell}$ is $\mu$-sharp. On the other hand, determining these parameters may be difficult without a precise statistical model. Thus, are there algorithmic ways of estimating or adaptively choosing these two parameters?
\end{itemize}
 
\section*{Acknowledgments}
Part of this research was completed in part while Alex L.\ Wang was supported by the Dutch Scientific Council (NWO)
grant OCENW.GROOT.2019.015 (OPTIMAL). Lijun Ding was  supported by an NSF TRIPODS grant to the Institute for Foundations of Data Science (NSF DMS-2023239) and
by DOE ASCR Subcontract 8F-30039 from Argonne National Laboratory.
Lijun Ding would like to thank Dmitriy Drusvyatskiy and Mateo Diaz for helpful discussions.
{
    \bibliographystyle{abbrv}

\begin{thebibliography}{10}

\bibitem{alizadeh1997complementarity}
F.~Alizadeh, J.-P.~A. Haeberly, and M.~L. Overton.
\newblock Complementarity and nondegeneracy in semidefinite programming.
\newblock {\em Mathematical programming}, 77(1):111--128, 1997.

\bibitem{attouch2010proximal}
H.~Attouch, J.~Bolte, P.~Redont, and A.~Soubeyran.
\newblock Proximal alternating minimization and projection methods for
  nonconvex problems: An approach based on the kurdyka-{\l}ojasiewicz
  inequality.
\newblock {\em Mathematics of operations research}, 35(2):438--457, 2010.

\bibitem{ball2002sharp}
K.~Ball, E.~A. Carlen, and E.~H. Lieb.
\newblock Sharp uniform convexity and smoothness inequalities for trace norms.
\newblock {\em Inequalities: Selecta of Elliott H. Lieb}, pages 171--190, 2002.

\bibitem{baraniuk2008simple}
R.~Baraniuk, M.~Davenport, R.~DeVore, and M.~Wakin.
\newblock A simple proof of the restricted isometry property for random
  matrices.
\newblock {\em Constructive approximation}, 28:253--263, 2008.

\bibitem{bauschke1999strong}
H.~H. Bauschke, J.~M. Borwein, and W.~Li.
\newblock Strong conical hull intersection property, bounded linear regularity,
  jameson’s property (g), and error bounds in convex optimization.
\newblock {\em Mathematical Programming}, 86:135--160, 1999.

\bibitem{bubeck2015convex}
S.~Bubeck et~al.
\newblock Convex optimization: Algorithms and complexity.
\newblock {\em Foundations and Trends{\textregistered} in Machine Learning},
  8(3-4):231--357, 2015.

\bibitem{cai2015rop}
T.~T. Cai and A.~Zhang.
\newblock Rop: Matrix recovery via rank-one projections.
\newblock {\em The Annals of Statistics}, 43(1):102--138, 2015.

\bibitem{candes2014solving}
E.~J. Cand{\`e}s and X.~Li.
\newblock Solving quadratic equations via phaselift when there are about as
  many equations as unknowns.
\newblock {\em Foundations of Computational Mathematics}, 14(5):1017--1026,
  2014.

\bibitem{candes2015phase}
E.~J. Candes, X.~Li, and M.~Soltanolkotabi.
\newblock Phase retrieval from coded diffraction patterns.
\newblock {\em Applied and Computational Harmonic Analysis}, 39(2):277--299,
  2015.

\bibitem{candes2011tight}
E.~J. Candes and Y.~Plan.
\newblock Tight oracle inequalities for low-rank matrix recovery from a minimal
  number of noisy random measurements.
\newblock {\em IEEE Transactions on Information Theory}, 57(4):2342--2359,
  2011.

\bibitem{candes2006stable}
E.~J. Candes, J.~K. Romberg, and T.~Tao.
\newblock Stable signal recovery from incomplete and inaccurate measurements.
\newblock {\em Communications on Pure and Applied Mathematics: A Journal Issued
  by the Courant Institute of Mathematical Sciences}, 59(8):1207--1223, 2006.

\bibitem{candes2013phaselift}
E.~J. Cand{\`e}s, T.~Strohmer, and V.~Voroninski.
\newblock Phaselift: Exact and stable signal recovery from magnitude
  measurements via convex programming.
\newblock {\em Communications on Pure and Applied Mathematics},
  66(8):1241--1274, 2013.

\bibitem{candes2005decoding}
E.~J. Candes and T.~Tao.
\newblock Decoding by linear programming.
\newblock {\em IEEE transactions on information theory}, 51(12):4203--4215,
  2005.

\bibitem{candes2010power}
E.~J. Cand{\`e}s and T.~Tao.
\newblock The power of convex relaxation: Near-optimal matrix completion.
\newblock {\em IEEE Transactions on Information Theory}, 56(5):2053--2080,
  2010.

\bibitem{chandrasekaran2012convex}
V.~Chandrasekaran, B.~Recht, P.~A. Parrilo, and A.~S. Willsky.
\newblock The convex geometry of linear inverse problems.
\newblock {\em Foundations of Computational mathematics}, 12:805--849, 2012.

\bibitem{charisopoulos2021low}
V.~Charisopoulos, Y.~Chen, D.~Davis, M.~D{\'\i}az, L.~Ding, and
  D.~Drusvyatskiy.
\newblock Low-rank matrix recovery with composite optimization: good
  conditioning and rapid convergence.
\newblock {\em Foundations of Computational Mathematics}, 21(6):1505--1593,
  2021.

\bibitem{charisopoulos2019composite}
V.~Charisopoulos, D.~Davis, M.~D{\'\i}az, and D.~Drusvyatskiy.
\newblock Composite optimization for robust blind deconvolution.
\newblock {\em arXiv preprint arXiv:1901.01624}, 2019.

\bibitem{chen2015exact}
Y.~Chen, Y.~Chi, and A.~J. Goldsmith.
\newblock Exact and stable covariance estimation from quadratic sampling via
  convex programming.
\newblock {\em IEEE Transactions on Information Theory}, 61(7):4034--4059,
  2015.

\bibitem{davis2018subgradient}
D.~Davis, D.~Drusvyatskiy, K.~J. MacPhee, and C.~Paquette.
\newblock Subgradient methods for sharp weakly convex functions.
\newblock {\em Journal of Optimization Theory and Applications}, 179:962--982,
  2018.

\bibitem{ding2022strict}
L.~Ding and M.~Udell.
\newblock A strict complementarity approach to error bound and sensitivity of
  solution of conic programs.
\newblock {\em Optimization Letters}, pages 1--24, 2022.

\bibitem{drusvyatskiy2016generic}
D.~Drusvyatskiy, A.~D. Ioffe, and A.~S. Lewis.
\newblock Generic minimizing behavior in semialgebraic optimization.
\newblock {\em SIAM Journal on Optimization}, 26(1):513--534, 2016.

\bibitem{drusvyatskiy2018error}
D.~Drusvyatskiy and A.~S. Lewis.
\newblock Error bounds, quadratic growth, and linear convergence of proximal
  methods.
\newblock {\em Mathematics of Operations Research}, 43(3):919--948, 2018.

\bibitem{drusvyatskiy2019efficiency}
D.~Drusvyatskiy and C.~Paquette.
\newblock Efficiency of minimizing compositions of convex functions and smooth
  maps.
\newblock {\em Mathematical Programming}, 178:503--558, 2019.

\bibitem{duchi2019solving}
J.~C. Duchi and F.~Ruan.
\newblock Solving (most) of a set of quadratic equalities: Composite
  optimization for robust phase retrieval.
\newblock {\em Information and Inference: A Journal of the IMA}, 8(3):471--529,
  2019.

\bibitem{edelman1988eigenvalues}
A.~Edelman.
\newblock Eigenvalues and condition numbers of random matrices.
\newblock {\em SIAM journal on matrix analysis and applications},
  9(4):543--560, 1988.

\bibitem{eremin1965relaxation}
I.~I. Eremin.
\newblock The relaxation method of solving systems of inequalities with convex
  functions on the left-hand side.
\newblock In {\em Doklady Akademii Nauk}, volume 160, pages 994--996. Russian
  Academy of Sciences, 1965.

\bibitem{ferris1988weak}
M.~C. Ferris.
\newblock Weak sharp minima and exact penalty functions.
\newblock Technical report, University of Wisconsin-Madison Department of
  Computer Sciences, 1988.

\bibitem{garber2022efficient}
D.~Garber and A.~Kaplan.
\newblock On the efficient implementation of the matrix exponentiated gradient
  algorithm for low-rank matrix optimization.
\newblock {\em Mathematics of Operations Research}, 2022.

\bibitem{goffin1977convergence}
J.-L. Goffin.
\newblock On convergence rates of subgradient optimization methods.
\newblock {\em Mathematical programming}, 13:329--347, 1977.

\bibitem{hoffman2003approximate}
A.~J. Hoffman.
\newblock On approximate solutions of systems of linear inequalities.
\newblock In {\em Selected Papers Of Alan J Hoffman: With Commentary}, pages
  174--176. World Scientific, 2003.

\bibitem{johnstone2020faster}
P.~R. Johnstone and P.~Moulin.
\newblock Faster subgradient methods for functions with h{\"o}lderian growth.
\newblock {\em Mathematical Programming}, 180(1-2):417--450, 2020.

\bibitem{lewis2014nonsmooth}
A.~S. Lewis.
\newblock Nonsmooth optimization: conditioning, convergence and semi-algebraic
  models.
\newblock In {\em Proceedings of the International Congress of Mathematicians,
  Seoul}, volume~4, pages 872--895, 2014.

\bibitem{li2020nonconvex}
X.~Li, Z.~Zhu, A.~Man-Cho~So, and R.~Vidal.
\newblock Nonconvex robust low-rank matrix recovery.
\newblock {\em SIAM Journal on Optimization}, 30(1):660--686, 2020.

\bibitem{nayakkankuppam1999conditioning}
M.~V. Nayakkankuppam and M.~L. Overton.
\newblock Conditioning of semidefinite programs.
\newblock {\em Mathematical programming}, 85(3), 1999.

\bibitem{necoara2019linear}
I.~Necoara, Y.~Nesterov, and F.~Glineur.
\newblock Linear convergence of first order methods for non-strongly convex
  optimization.
\newblock {\em Mathematical Programming}, 175:69--107, 2019.

\bibitem{nemirovski1985optimal}
A.~S. Nemirovski and Y.~E. Nesterov.
\newblock Optimal methods of smooth convex minimization.
\newblock {\em Zhurnal Vychislitel'noi Matematiki i Matematicheskoi Fiziki},
  25(3):356--369, 1985.

\bibitem{nesterov2013first}
Y.~Nesterov and A.~Nemirovski.
\newblock On first-order algorithms for l1/nuclear norm minimization.
\newblock {\em Acta Numerica}, 22:509--575, 2013.

\bibitem{plan2014dimension}
Y.~Plan and R.~Vershynin.
\newblock Dimension reduction by random hyperplane tessellations.
\newblock {\em Discrete \& Computational Geometry}, 51(2):438--461, 2014.

\bibitem{polyak1969minimization}
B.~T. Polyak.
\newblock Minimization of unsmooth functionals.
\newblock {\em USSR Computational Mathematics and Mathematical Physics},
  9(3):14--29, 1969.

\bibitem{portnoy1997gaussian}
S.~Portnoy and R.~Koenker.
\newblock The gaussian hare and the laplacian tortoise: computability of
  squared-error versus absolute-error estimators.
\newblock {\em Statistical Science}, 12(4):279--300, 1997.

\bibitem{recht2010guaranteed}
B.~Recht, M.~Fazel, and P.~A. Parrilo.
\newblock Guaranteed minimum-rank solutions of linear matrix equations via
  nuclear norm minimization.
\newblock {\em SIAM review}, 52(3):471--501, 2010.

\bibitem{renegar2022simple}
J.~Renegar and B.~Grimmer.
\newblock A simple nearly optimal restart scheme for speeding up first-order
  methods.
\newblock {\em Foundations of Computational Mathematics}, 22(1):211--256, 2022.

\bibitem{rockafellar1997convex}
R.~T. Rockafellar.
\newblock {\em Convex analysis}, volume~11.
\newblock Princeton university press, 1997.

\bibitem{roulet2015renegar}
V.~Roulet, N.~Boumal, and A.~d'Aspremont.
\newblock Renegar's condition number, sharpness and compressed sensing
  performance.
\newblock {\em arXiv preprint arXiv:1506.03295}, 2015.

\bibitem{roulet2017sharpness}
V.~Roulet and A.~{d'Aspremont}.
\newblock Sharpness, restart and acceleration.
\newblock {\em Advances in Neural Information Processing Systems}, 30, 2017.

\bibitem{rudelson2006sparse}
M.~Rudelson and R.~Vershynin.
\newblock Sparse reconstruction by convex relaxation: Fourier and gaussian
  measurements.
\newblock In {\em 2006 40th Annual Conference on Information Sciences and
  Systems}, pages 207--212. IEEE, 2006.

\bibitem{sturm2000error}
J.~F. Sturm.
\newblock Error bounds for linear matrix inequalities.
\newblock {\em SIAM Journal on Optimization}, 10(4):1228--1248, 2000.

\bibitem{vershynin_2012}
R.~Vershynin.
\newblock {\em Introduction to the non-asymptotic analysis of random matrices},
  page 210–268.
\newblock Cambridge University Press, 2012.

\bibitem{yang2018rsg}
T.~Yang and Q.~Lin.
\newblock Rsg: Beating subgradient method without smoothness and strong
  convexity.
\newblock {\em The Journal of Machine Learning Research}, 19(1):236--268, 2018.

\bibitem{zhang2000global}
S.~Zhang.
\newblock Global error bounds for convex conic problems.
\newblock {\em SIAM Journal on Optimization}, 10(3):836--851, 2000.

\bibitem{zhou2017unified}
Z.~Zhou and A.~M.-C. So.
\newblock A unified approach to error bounds for structured convex optimization
  problems.
\newblock {\em Mathematical Programming}, 165:689--728, 2017.

\end{thebibliography}

}
\appendix

\section{Proof of Proposition \ref{prop:genSharpnessAndConditioning} for sparse vector recovery}\label{sec:  Proof of Proposition sharpSparse}
\begin{proof}
	Let $\delta\in\R^n$ be arbitrary. Our goal is to show that
	\begin{align*}
		\norm{x^\natural + \delta}_1 + \sqrt{k'} \norm{A\delta}_1 - \norm{x^\natural} \geq \frac{\epsilon}{2+\epsilon}\norm{\delta}.
	\end{align*}
	Without loss of generality, we may reindex $\R^n$ so that $\supp(x^\natural)\subseteq[k]$ and $\abs{\delta_{k+1}}\geq \abs{\delta_{k+2}} \geq \dots \geq \abs{\delta_n}$.
	We decompose $\delta$ as a sum of vectors $\delta_k + \sigma_1 + \dots + \sigma_t$, each in $\R^n$. Specifically, $\delta_k$ extracts the first $k$ coordinates of $\delta$, $\sigma_1$ extracts the next $k'$ coordinates of $\delta$, and each subsequent $\sigma_i$ extracts the next $k'$ coordinates of $\delta$. Finally, $\sigma_t$ may extract fewer than $k'$ coordinates of $\delta$.
	Let $\delta_{k^\perp} \coloneqq \delta - \delta_k = \sigma_1 + \dots + \sigma_t$.
	
	We can bound
	\begin{align*}
		\norm{\delta_{k^\perp}}_1 &\geq \sum_{i=1}^{t-1} \norm{\sigma_i}_1 && \text{($\sigma_i$ are disjoint)}\\
		&= k'\sum_{i=1}^{t -1} \frac{\norm{\sigma_i}_1}{k'}\\
		&\geq k'\sum_{i=2}^{t}\norm{\sigma_i}_\infty && (\abs{\delta_{k+1}}\geq\dots\geq\abs{\delta_n})\\
		&\geq \sqrt{k'}\sum_{i=2}^t \norm{\sigma_i}_2\\
		& \geq \sqrt{k'}\cdot\left(\ripup_{k'}\right)^{-1}\sum_{i=2}^t \norm{A \sigma_i}_1\\
		&\geq \sqrt{k'}\cdot\left(\ripup_{k'}\right)^{-1}\left(\norm{A(\delta_k + \sigma_1)}_1 - \norm{A\delta}_1\right) && \text{(triangle inequality)}\\
		&\geq \sqrt{k'} \frac{\ripdown_{k+k'}}{\ripup_{k'}}\norm{\delta_k + \sigma_1}_2 - \sqrt{k'}\norm{A\delta}_1 && \\
		&\geq \sqrt{k'} \frac{\ripdown_{k+k'}}{\ripup_{k'}}\norm{\delta_k}_2 - \sqrt{k'}\norm{A\delta}_1 && \text{($\delta_k,\,\sigma_1$ are disjoint)}\\
		&\geq \sqrt{\frac{k'}{k}}\frac{\ripdown_{k+k'}}{\ripup_{k'}}\norm{\delta_k}_1 - \sqrt{k'}\norm{A\delta}_1\\
		&\geq (1+\epsilon) \norm{\delta_k}_1 - \sqrt{k'}\norm{A\delta}_1.
	\end{align*}
	
	We are now ready to prove sharpness:
	\begin{align*}
		&\norm{x^\natural + \delta}_1 + \sqrt{k'}\norm{A\delta}_1 - \norm{x^\natural}_1\\
		&\qquad\geq \norm{\delta_{k^\perp}}_1 - \norm{\delta_k}_1 +\sqrt{k'}\norm{A\delta}_1 && \text{($x^\natural+\delta_k,\,\delta_{k^\perp}$ are disjoint)}\\
		&\qquad= \left(\frac{2}{2+\epsilon} + \frac{\epsilon}{2+\epsilon}\right)\norm{\delta_{k^\perp}}_1 - \norm{\delta_k}_1 +\sqrt{k'}\norm{A\delta}_1\\
		&\qquad\geq \left(\frac{2}{2+\epsilon}(1+\epsilon) - 1\right)\norm{\delta_k}_1 + \frac{\epsilon}{2+\epsilon}\norm{\delta_{k^\perp}}_1 && \text{(bound on $\norm{\delta_{k^\perp}}_1$)}\\
		&\qquad\qquad + \left(1 - \frac{2}{2+\epsilon}\right)\sqrt{k'}\norm{A\delta}_1\\
		&\qquad \geq \frac{\epsilon}{2+\epsilon}\norm{\delta}_1.
	\end{align*}
	This proves the claim as $\delta\in \R^n$ was arbitrary.
\end{proof} 
\end{document}